\numberwithin{equation}{section}
\theoremstyle{plain}
\newtheorem{theorem}[subsection]{Theorem}
\newtheorem{proposition}[subsection]{Proposition}
\newtheorem{lemma}[subsection]{Lemma}
\theoremstyle{definition}
\newtheorem{definition}[subsection]{Definition}
\newtheorem{remark}[subsection]{Remark}
\renewcommand{\leq}{\leqslant}
\renewcommand{\geq}{\geqslant}
\newsavebox{\proofbox}
\savebox{\proofbox}{\begin{picture}(7,7)%
  \put(0,0){\framebox(7,7){}}\end{picture}}
\newcommand\E{\mathbb{E}}
\newcommand\Z{\mathbb{Z}}
\newcommand\R{\mathbb{R}}
\newcommand\N{\mathbb{N}}
\newcommand\eps{\varepsilon}
\renewcommand\mod{{\ \operatorname{mod}\ }}
\begin{document}

\title[Multi-dimensional Szemer\'edi via correspondence]{A multi-dimensional Szemer\'edi theorem for the primes via a correspondence principle}

\author{Terence Tao}
\address{UCLA Department of Mathematics, Los Angeles, CA 90095-1596.
}
\email{tao@math.ucla.edu}

\author{Tamar Ziegler}
\address{Department of Mathematics, Technion, Haifa, Israel 32000.}
\email{tamarzr@tx.technion.ac.il}

\thanks{The first author is supported by NSF grant DMS-0649473 and by a Simons Investigator Award. The second author is supported by ISF grant 407/12. The second author was on sabbatical at Stanford at the time this work was carried out; she would like to thank the Stanford math department for its hospitality and support. }

\subjclass{11B30, 11T06}

\begin{abstract}  We establish a version of the Furstenberg-Katznelson multi-dimensional  Szemer\'edi theorem in the primes ${\mathcal P} := \{2,3,5,\ldots\}$, which roughly speaking asserts that any dense subset of ${\mathcal P}^d$ contains finite constellations of any given rational shape.  Our arguments are based on a weighted version of the Furstenberg correspondence principle, relative to a weight which obeys an infinite number of pseudorandomness (or ``linear forms'') conditions, combined with the main results of a series of papers by Green and the authors which establish such an infinite number of pseudorandomness conditions for a weight associated with the primes. The same result, by a rather different method, has been simultaneously established by Cook, Magyar, and Titichetrakun, and more recently by Fox and Zhao.
\end{abstract}

\maketitle

\section{Introduction}

We recall the following famous theorem of Szemer\'edi:

\begin{theorem}[Szemer\'edi's theorem]\label{sz-thm}\cite{szemeredi}  Let $k \geq 1$ be a natural number, and let $\delta > 0$.  Then if $N$ is sufficiently large, every subset $A$ of $[N] := \{ n \in \Z: 1 \leq n \leq N \}$ of cardinality $|A| \geq \delta N$ contains an arithmetic progression $a,a+r,\ldots,a+(k-1)r$ of length $k$, where $a \in \Z$ and $r$ is a positive integer.
\end{theorem}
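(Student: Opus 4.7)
The plan is to deduce Szemer\'edi's theorem from Furstenberg's multiple recurrence theorem via the Furstenberg correspondence principle, rather than attempting a direct combinatorial or higher-order Fourier-analytic attack. This routes the difficulty into a single ergodic-theoretic statement and is in the spirit of the present paper.

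First I would carry out the correspondence. Suppose for contradiction that the theorem fails for some $k \geq 3$ and $\delta > 0$: there is a sequence $A_N \subseteq [N]$ with $|A_N| \geq \delta N$ containing no $k$-term arithmetic progression of positive common difference, with $N \to \infty$ along a subsequence. I would view each $A_N$ as an indicator on $\Z$ (extended by zero), form the shift-invariant empirical measures obtained by averaging point-masses over translates in $[N]$, and extract by weak-$*$ compactness on $\{0,1\}^\Z$ a limit probability measure $\mu$ invariant under the shift $T$. Setting $E := \{\omega : \omega(0) = 1\}$, the density lower bound passes to $\mu(E) \geq \delta$, while the absence of $k$-APs passes to
\begin{equation*}
\mu(E \cap T^{-r}E \cap \cdots \cap T^{-(k-1)r}E) = 0 \quad \text{for all } r \neq 0,
\end{equation*}
and hence to the vanishing of the associated Ces\`aro averages.

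Next I would prove Furstenberg's multiple recurrence theorem: for any invertible measure-preserving system $(X,\mathcal{B},\mu,T)$ and any $E \in \mathcal{B}$ with $\mu(E) > 0$,
\begin{equation*}
\liminf_{N \to \infty} \frac{1}{N} \sum_{r=1}^N \mu(E \cap T^{-r}E \cap \cdots \cap T^{-(k-1)r}E) > 0,
\end{equation*}
which contradicts the previous display. I would argue by induction on $k$, with base case $k=2$ being Poincar\'e recurrence. For the inductive step I would invoke Furstenberg's structure theorem, exhibiting any ergodic system as the top of a (possibly transfinite) tower of extensions each of which is either compact or weakly mixing over the previous factor. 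Weakly mixing extensions are treated by a van der Corput-type $L^2$ computation that reduces the problem to the preceding factor, while compact extensions are treated by approximating $1_E$ fiberwise by almost periodic functions and combining the inductive hypothesis with a uniform van der Waerden theorem.

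The hard part will be the compact extension step, which is the technical core of the ergodic approach: showing that the Szemer\'edi property is preserved under a relatively compact extension demands a Bessel-type inequality for almost periodic functions along fibers, uniform recurrence estimates for their orbits, and careful bookkeeping of conditional expectations. A secondary obstacle, forced by the possibly transfinite nature of the tower, is verifying that the Szemer\'edi property passes to increasing unions of factors --- a continuity statement whose nontriviality foreshadows the characteristic-factor and nilsequence machinery that drives the main results of this paper.
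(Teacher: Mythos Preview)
The paper does not prove Theorem~\ref{sz-thm}; it is stated with a citation to Szemer\'edi's original paper \cite{szemeredi} and used as background. The closest the paper comes is in Section~2, where it deduces the \emph{multidimensional} Szemer\'edi theorem (Theorem~\ref{fk-thm}) from the multiple recurrence theorem (Theorem~\ref{mrt}) via the unweighted correspondence principle (Proposition~\ref{fcp}), but even there it explicitly treats Theorem~\ref{mrt} as a black box: ``We will not reprove Theorem~\ref{mrt} here.''

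Your correspondence step is precisely the $d=1$ instance of Proposition~\ref{fcp} and its proof, so that part matches the paper. Where you diverge is in proposing to actually \emph{prove} multiple recurrence via Furstenberg's structure theorem (transfinite tower of compact and relatively weakly mixing extensions, van der Corput for the weakly mixing case, van der Waerden plus fiberwise almost periodicity for the compact case, and a limit-of-factors argument). That outline is the correct one---it is Furstenberg's original argument---but it goes well beyond anything the paper undertakes. The paper's entire strategy is to \emph{assume} the ergodic multiple recurrence theorem and concentrate its effort on a weighted correspondence principle (Proposition~\ref{fcp-weight}) that converts pseudorandom weights on the primes into a genuine $\Z^d$-system to which Theorem~\ref{mrt} applies. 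So your plan is sound as a proof of Szemer\'edi's theorem, but there is no paper-side proof to compare it against; relative to this paper, the appropriate ``proof'' is simply the citation.
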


In \cite{fk0}, Furstenberg and Katznelson established a multi-dimensional generalisation of Szemer\'edi's theorem:

\begin{theorem}[Multidimensional Szemer\'edi theorem]\label{fk-thm}\cite[Theorem B]{fk0}  Let $d \geq 1$ be a natural number, let $v_1,\ldots,v_k$ be elements of $\Z^d$, and let $\delta > 0$.  Then if $N$ is sufficiently large, every subset $A$ of $[N]^d$ of cardinality $|A| \geq \delta N^d$ contains a set of the form $a+rv_1,\ldots,a+rv_k$, where $a \in \Z^d$ and $r$ is a positive integer.
\end{theorem}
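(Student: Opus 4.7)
The plan is to establish Theorem \ref{fk-thm} via the Furstenberg correspondence principle, following \cite{fk0}. First I would argue by contradiction: assuming the theorem fails for some $d,v_1,\ldots,v_k,\delta$, there is a sequence $N_j \to \infty$ and sets $A_j \subseteq [N_j]^d$ with $|A_j| \geq \delta N_j^d$ containing no configuration $a+rv_1,\ldots,a+rv_k$ with $r \geq 1$. Averaging the indicators $1_{A_j}$ (viewed as elements of $\{0,1\}^{\Z^d}$) over uniformly random translates of $[N_j]^d$ and passing to a weak-$*$ limit on the compact space $X := \{0,1\}^{\Z^d}$, I obtain a Borel probability measure $\mu$ invariant under the shift $\Z^d$-action $T$, together with the clopen set $E := \{x \in X : x(0)=1\}$ of measure $\mu(E) \geq \delta$, such that
\begin{equation*}
\mu\bigl(T^{-rv_1}E \cap \cdots \cap T^{-rv_k}E\bigr) = 0 \qquad \text{for every } r \geq 1.
\end{equation*}

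I am thereby reduced to proving the following ergodic multiple recurrence statement: for every measure-preserving $\Z^d$-action $T$ on a probability space $(X,\B,\mu)$, every $v_1,\ldots,v_k \in \Z^d$, and every $E \in \B$ with $\mu(E)>0$,
\begin{equation*}
\liminf_{N \to \infty} \frac{1}{N}\sum_{r=1}^{N} \mu\bigl(T^{-rv_1}E \cap \cdots \cap T^{-rv_k}E\bigr) > 0.
\end{equation*}
Setting $S_i := T^{v_i}$ produces $k$ commuting measure-preserving transformations of $(X,\mu)$, so the assertion becomes the Furstenberg--Katznelson multiple recurrence theorem for commuting transformations.

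To prove this recurrence I would follow the structural approach: every $\Z^d$-system, after a suitable ergodic decomposition, is built from the trivial factor by a (possibly transfinite) tower of extensions that are at each stage either relatively compact or relatively weakly mixing. On the base of the tower, recurrence is immediate from equidistribution on a compact abelian group. Recurrence is then propagated up the tower in three steps: (a) through a relatively compact extension via a Szemer\'edi-type argument, using simultaneous almost-periodicity of the $S_i$ on the fibres to manufacture many joint $\eps$-returns; (b) through a relatively weakly mixing extension via iterated van der Corput, which shows that the multicorrelation on the extension differs only by $o(1)$ from the corresponding multicorrelation on the base; and (c) through inverse limits by countable additivity.

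The principal obstacle is step (b). Because $S_1,\ldots,S_k$ act jointly, a single application of van der Corput is not enough to eliminate one of them from the multicorrelation; one is forced into an inductive scheme on the combinatorial complexity of the tuple $(v_1,\ldots,v_k)$, at each stage replacing it by a tuple with fewer essentially independent directions while preserving control over the multicorrelations in $L^2(\mu)$. Carrying this induction out uniformly over all commuting $\Z^d$-systems, and coping with the fact that the characteristic factor for these multicorrelations is no longer the Kronecker factor (as it would be in the one-dimensional $k=3$ case), is the technical heart of the Furstenberg--Katznelson argument and is where essentially all of the difficulty concentrates.
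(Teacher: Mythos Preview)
Your reduction via the correspondence principle is essentially what the paper does: the paper constructs the same measure on $X = 2^{\Z^d}$ by pushing forward uniform measure on $[N_n]^d$ under $a \mapsto A_n - a$ and passing to a limit (the paper uses a Banach limit rather than Prokhorov, but only for reasons that matter in the later weighted version, Proposition~\ref{fcp-weight}). From there the paper obtains exactly your contradiction with Theorem~\ref{mrt}.

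The substantive difference is in scope. The paper explicitly declares Theorem~\ref{mrt} a black box (``We will not reprove Theorem~\ref{mrt} here''), so its proof of Theorem~\ref{fk-thm} consists solely of Proposition~\ref{fcp} plus the one-line deduction. You instead go on to outline the Furstenberg--Katznelson structure theorem proof of the recurrence statement. Your outline is accurate as a roadmap of \cite{fk0}---the tower of compact and weakly mixing extensions, the van der Corput induction for the relatively weakly mixing step, the passage through inverse limits---and you correctly flag that the relatively weakly mixing step for several commuting transformations is where the real work lies. But as written it remains an outline: the inductive scheme you allude to in step~(b), and the verification that compact extensions preserve the SZ property for commuting maps, each require nontrivial bookkeeping that you have not supplied. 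If your intent is to match the paper, you should simply cite Theorem~\ref{mrt}; if your intent is to reprove \cite{fk0}, the present sketch would need substantial expansion before it counts as a proof.
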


The pattern $a+rv_1,\ldots,a+rv_k$ is sometimes referred to as a \emph{constellation} of shape $v_1,\ldots,v_k$, and serves as a multi-dimensional generalisation of the concept of an arithmetic progression.

In \cite{gt-primes}, Green and the first author obtained a version of Szemer\'edi's theorem relative to the primes (generalising a previous result of Green \cite{green} in the $k=3$ case):

\begin{theorem}[Szemer\'edi's theorem in the primes]\label{gt-thm}\cite{gt-primes}  Let $k \geq 1$ be a natural number, and let $\delta > 0$.  Then if $N$ is sufficiently large, every subset $A$ of ${\mathcal P} \cap [N]$ of cardinality $|A| \geq \delta |{\mathcal P} \cap [N]|$, where ${\mathcal P} = \{2,3,5,\ldots\}$ are the set of primes, contains an arithmetic progression $a,a+r,\ldots,a+(k-1)r$ of length $k$, where $a \in \Z$ and $r$ is a positive integer.
\end{theorem}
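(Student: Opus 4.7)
The primes have density only $\sim 1/\log N$ in $[N]$, so Szemer\'edi's theorem (Theorem \ref{sz-thm}) cannot be applied directly to $A$. The plan is a \emph{transference} argument: embed the primes into a pseudorandom environment in which a renormalised indicator function of the primes is bounded in mean, and then prove a Szemer\'edi-type theorem relative to that environment. Before anything else one performs the \emph{$W$-trick}: pick a slowly growing $w = w(N) \to \infty$, set $W := \prod_{p \leq w} p$, and by Dirichlet's theorem together with pigeonhole choose a residue $b$ coprime to $W$ such that the rescaled set $A' := \{n \in [N/W] : Wn+b \in A\}$ occupies at least a $(\delta/2)$-fraction of $\mathcal{P}' := \{n \in [N/W] : Wn+b \text{ is prime}\}$. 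This removes the local congruence obstructions, so that the renormalised indicator $\Lambda'(n) := (\varphi(W)/W)\log N \cdot 1_{\mathcal{P}'}(n)$ has mean $1 + o(1)$ on $[N/W]$.

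Next, construct a nonnegative pseudorandom majorant $\nu$ of $\Lambda'$ via a truncated Selberg / Goldston--Y{\i}ld{\i}r{\i}m divisor sum,
\[
\nu(n) := \frac{\varphi(W)\log R}{W}\Big( \sum_{d \mid Wn+b,\, d \leq R} \mu(d)\, \chi(\log d / \log R) \Big)^2,
\]
with $R = N^\eta$ for some small $\eta > 0$ and $\chi$ a smooth cutoff vanishing at $1$. Standard sieve asymptotics (Mertens, Dirichlet, contour shift against $\chi$) yield $\E \nu = 1 + o(1)$, the pointwise bound $\nu \geq c \Lambda'$ on $[N/W]$, a \emph{linear forms condition}
\[
\E_{x_1,\ldots,x_t \in [N/W]} \prod_{i=1}^m \nu(\psi_i(x_1,\ldots,x_t)) = 1 + o(1)
\]
for every finite collection of pairwise non-parallel affine forms $\psi_1,\ldots,\psi_m : \Z^t \to \Z$ relevant to the analysis of $k$-APs, and a \emph{correlation condition} controlling higher joint moments of $\nu$ at shifted arguments.

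With $\nu$ in hand, one proves the \emph{relative Szemer\'edi theorem}: whenever $f : [N/W] \to \R$ satisfies $0 \leq f \leq \nu$ and $\E f \geq \delta'$,
\[
\E_{x, r \in [N/W]} f(x)f(x+r)\cdots f(x+(k-1)r) \geq c(k, \delta') - o(1).
\]
The mechanism is a Koopman--von Neumann / dense model decomposition $f = f_1 + f_2$ with $0 \leq f_1 \leq 1 + o(1)$, $\E f_1 \geq \delta' - o(1)$, and $\|f_2\|_{U^{k-1}} = o(1)$; apply the Varnavides form of Szemer\'edi's theorem to $f_1$ to bound the main term from below, and a \emph{generalised von Neumann theorem} --- whose proof invokes the linear forms and correlation conditions through repeated Cauchy--Schwarz --- to show that each of the $2^k-1$ error terms involving $f_2$ is $o(1)$.

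Finally, apply this relative Szemer\'edi theorem to $f := \Lambda' \cdot 1_{A'}$; any resulting nontrivial $k$-AP $\{n, n+r, \ldots, n+(k-1)r\} \subseteq A'$ pulls back through the $W$-trick to a $k$-AP $\{Wn+b, Wn+b+Wr, \ldots, Wn+b+(k-1)Wr\} \subseteq A$, which is the progression sought. The main obstacle is the relative Szemer\'edi step: constructing the dense model $f_1$ in the presence of an unbounded majorant $\nu$ requires a Hahn--Banach / energy-increment transference argument, and the accompanying generalised von Neumann theorem --- showing that the Gowers $U^{k-1}$ norm controls $\nu$-majorised $k$-AP counts --- requires the full strength of the linear forms and correlation conditions.
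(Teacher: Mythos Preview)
Your sketch is correct and faithfully reproduces the original Green--Tao argument from \cite{gt-primes}, which is exactly the source the paper cites for Theorem~\ref{gt-thm}. The paper does not give an independent proof of this statement; it is quoted as a known result.

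That said, the paper's own machinery furnishes a genuinely different route to the same conclusion, namely the $d=1$ case of Theorem~\ref{main} via Theorem~\ref{rmst}. The contrast is worth noting. You build a sieve-theoretic majorant $\nu$ (Goldston--Y{\i}ld{\i}r{\i}m type) obeying a \emph{finite} list of linear forms conditions together with a correlation condition, and then run a transference/dense-model decomposition plus the Varnavides form of Szemer\'edi's theorem. The paper instead takes the renormalised prime indicator $\nu_i = \Lambda'$ itself as the weight, invokes the results of \cite{linprimes}, \cite{mobius}, \cite{gtz} to verify an \emph{infinite} family of linear forms conditions (no correlation condition needed), and then applies a weighted Furstenberg correspondence principle (Proposition~\ref{fcp-weight}) to pass directly to a $\Z$-system where the Furstenberg--Katznelson recurrence theorem applies as a black box. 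Your approach is far more self-contained---the sieve asymptotics are elementary compared to the full inverse theorem for the Gowers norms---but the dense-model step does not adapt cleanly to the Cartesian product ${\mathcal P}^d$ because of the self-correlations discussed in the introduction. The paper's correspondence-principle approach sidesteps the transference machinery entirely, at the cost of importing the heavy results of \cite{linprimes}, \cite{mobius}, \cite{gtz}, and this is precisely what allows the extension to arbitrary $d$.
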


Among other things, Theorem \ref{gt-thm} implies that the primes ${\mathcal P}$ contain arbitrarily long arithmetic progressions.

The purpose of this paper is to similarly obtain a relative version of the multi-dimensional Szemer\'edi theorem in the primes, first conjectured in \cite[\S 12]{gauss}:

\begin{theorem}[Multidimensional Szemer\'edi theorem in the primes]\label{main}  Let $d \geq 1$ be a natural number, let $v_1,\ldots,v_k$ be elements of $\Z^d$, and let $\delta > 0$.  Then if $N$ is sufficiently large, every subset $A$ of $({\mathcal P} \cap [N])^d$ of cardinality $|A| \geq \delta |{\mathcal P} \cap [N]|^d$ contains a set of the form $a+rv_1,\ldots,a+rv_k$, where $a \in \Z^d$ and $r$ is a positive integer.
\end{theorem}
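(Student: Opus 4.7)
The plan is a transference argument built around a weighted version of the Furstenberg correspondence principle, reducing Theorem \ref{main} directly to Theorem \ref{fk-thm}. Concretely, one majorises the primes by a pseudorandom weight $\nu : \Z \to \R^+$, performs a $W$-trick to view the given set $A$ as a set of positive density relative to $\nu^{\otimes d}$ on a box $[N']^d$, and then uses a limiting procedure to construct a standard probability space and a measure-preserving $\Z^d$-action in which the rescaled $A$ corresponds to a set $E$ of positive measure. Furstenberg--Katznelson multiple recurrence applied to $E$ then supplies the desired constellations after transferring back.

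The first step is the $W$-trick: fix a slowly growing $w \to \infty$ with $W := \prod_{p \leq w} p$, and pigeonhole in the residues $\vec b \in ((\Z/W\Z)^\times)^d$ of the coordinates $\mod W$ to pass to a rescaled set $A' \subset [N']^d$ with $N' \sim N/W$, which has $\nu^{\otimes d}$-density at least $\delta$. Here $\nu$ is a truncated divisor sum of Goldston--Pintz--Y\i ld\i r\i m type (as employed in \cite{gt-primes}), suitably normalised and adapted to the residue class $\vec b$. It then suffices to find a constellation of shape $v_1,\dots,v_k$ inside any such $A'$.

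Next I would upgrade the classical Furstenberg correspondence principle to this weighted setting. Taking a limit along an ultrafilter (or along a F\o lner sequence) of the measures $(N')^{-d}\, 1_{A'}(n)\, \nu^{\otimes d}(n)$ on $[N']^d$ together with the shifts $n \mapsto n + r v_i$ should produce a $\Z^d$-measure-preserving system $(\X,\mu,T_1,\dots,T_d)$ and a set $E \subset \X$ with $\mu(E) \geq \delta$, such that for some positive integer $r$ the joint probability $\mu\bigl(E \cap T_{rv_1}^{-1} E \cap \dots \cap T_{rv_k}^{-1} E\bigr)$ provides a lower bound for the $\nu^{\otimes d}$-weighted count of constellations in $A'$. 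The Furstenberg--Katznelson theorem (equivalent to Theorem \ref{fk-thm}) then delivers the required positivity inside $(\X,\mu)$.

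The crux of the argument, and the main obstacle, will be verifying that this limiting procedure genuinely produces a probability measure $\mu$ with $\mu(E) \geq \delta$ rather than something singular or trivial. Because $\nu$ is unbounded, this amounts to showing that $\nu^{\otimes d}$-weighted expectations of arbitrary bounded cylinder functions coincide, up to $o(1)$, with their unweighted expectations; equivalently, that $\E_{n \in [N']^d,\, r} \prod_{i=1}^{m} \nu^{\otimes d}(n + r w_i) = 1 + o(1)$ for any finite family of vectors $w_i \in \Z^d$, and more generally for arbitrary finite systems of affine-linear forms in many variables. This is exactly an \emph{infinite} family of linear forms and correlation conditions on $\nu$, and is precisely what the series of papers of Green and the present authors supplies for the truncated divisor majorant. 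Granted these estimates, the correspondence principle goes through, Theorem \ref{fk-thm} applies in $(\X,\mu)$, and the resulting positive lower bound on the weighted count of constellations in $A'$ transfers back (undoing the correspondence, the rescaling, and the $W$-trick) to yield the desired constellation inside $A \subset (\mathcal P \cap [N])^d$.
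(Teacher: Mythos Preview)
Your overall strategy---$W$-trick, weighted Furstenberg correspondence, appeal to Furstenberg--Katznelson---is the paper's. But there is a concrete error in the choice of weight. The truncated divisor sum of Goldston--Pintz--Y{\i}ld{\i}r{\i}m type from \cite{gt-primes} satisfies only \emph{finitely} many linear forms conditions at any fixed sieve level $R=N^\gamma$; the number of conditions one can verify is bounded in terms of $\gamma$. The weighted correspondence principle, however, needs an \emph{infinite} family of such conditions to hold simultaneously for a single $\nu$, because the limiting measure must be consistent on \emph{all} cylinder sets. The paper therefore does not use a sieve majorant at all: it takes $\nu_i(a):=\tfrac{\phi(W)}{W}(\log N)\,1_{\mathcal P'_i}(a)$, the normalised indicator of the primes themselves in the relevant residue class, and verifies the full infinite family of linear forms conditions via the main theorem of \cite{linprimes} (together with \cite{mobius}, \cite{gtz}). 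Those papers concern the von Mangoldt function and the primes, not a sieve majorant, so your claim that they supply the infinite linear forms condition ``for the truncated divisor majorant'' is not correct.

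There is a second gap in the correspondence step itself. The naive weighted measure you describe, namely the pushforward to $2^{\Z^d}$ of $(N')^{-d}\sum_a \nu^{\otimes d}(a)\,\delta_{A'-a}$, is \emph{not} approximately shift-invariant: because $\nu$ is unbounded, $\E_a|\nu(a)-\nu(a-h)|$ is of order $1$ rather than $o(1)$, so the usual boundary argument fails. The paper fixes this by inserting a Varnavides-type average over dilations $r\in[M_n]$ with $M_n=o(N'_n)$, building a family of measures $\mu_{\Omega,n}$ indexed by finite windows $\Omega=(\Omega_1,\dots,\Omega_d)$ (with weight $\prod_i\prod_{c\in\Omega_i}\nu_i(a_i+cr)$), and proving a compatibility proposition---that $\mu_\Omega$ and $\mu_{\Omega'}$ agree on $\mathcal B_\Omega$ whenever $\Omega\subset\Omega'$---via a weighted generalised von Neumann inequality and repeated Cauchy--Schwarz. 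This compatibility is what makes the double limit $\mu$ well-defined and shift-invariant, and is where most of the work in the correspondence principle resides; it is absent from your sketch.
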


Theorem \ref{main} was previously obtained in \cite{cook} in the special case that $v_1,\ldots,v_k$ were in general position in the sense that no two of the $v_1,\ldots,v_k$ were in a common coordinate hyperplane.  Simultaneously\footnote{Note added in proof: subsequently to the initial release of this paper and of \cite{cookt}, a simplified proof of Theorem \ref{main} was given in \cite{fox}, which is similar to the arguments in this paper, except that the use of the weighted Furstenberg correspondence principle has been replaced by a simpler argument involving a weighted version of the Varnavides averaging trick \cite{var}.} with our paper, an alternate proof of this result has also been obtained by Cook, Magyar and Titichetrakun \cite{cookt}; see the discussion below for a comparison between the two arguments.  

Our proof of this theorem differs slightly from the proof of Theorem \ref{gt-thm}.  To prove the latter theorem in \cite{gt-primes}, sieve theory was used to construct a weight $\nu: \Z/N'\Z \to \R^+$ which was concentrated on a slight variant $\{ n \in [N'']: Wn+b \in {\mathcal P} \}$ of the primes (for suitable choices of parameters $N'', W, b$, with $[N'']$ embedded into $\Z/N'\Z$), which obeyed a finite number of pseudorandomness conditions (referred to in that paper as the \emph{linear forms conditions} and \emph{correlation conditions}), and then used methods\footnote{In subsequent work by Gowers \cite{gowers} and independently Reingold-Trevisan-Tulsiani-Vadhan \cite{reingold}, a much simpler proof of the transference principle, relying primarily on the Hahn-Banach theorem, was obtained.} inspired by ergodic theory to establish a \emph{transference principle} which allowed one to deduce from Theorem \ref{sz-thm} a relative version of that theorem, in which $A$ had positive density relative to $\nu$ instead of uniform measure.  See \cite{gt-primes} for details.  Very recently in \cite{cfz}, it has been shown that the correlation condition can be omitted in the hypotheses for \cite{gt-primes}, although some portion of the linear forms condition is still necessary.

A similar strategy was employed in \cite{gauss} to obtain an analogue of Theorem \ref{main} in which $d=2$, and the Cartesian square ${\mathcal P}^2$ of the set ${\mathcal P}$ of (rational) primes was replaced by the set ${\mathcal P}[i]$ of \emph{Gaussian primes}.  The main point here was that one could similarly use sieve theory to construct a two-dimensional weight function $\nu$ concentrated on (a slight variant of) the Gaussian primes which obeyed pseudorandomness conditions analogous to those in the rational case.  There were some slight technical issues relating to the symmetries of the set of Gaussian primes with respect to reflection around the coordinate axes, but these turned out not to seriously impact the required pseudorandomness properties for $\nu$.  (See also \cite{tz}, \cite{gt-selberg}, \cite{hoang}, \cite{zhou} for several other contexts in which the transference principle has been deployed.)

However, it turns out to be difficult to adopt this strategy directly for Theorem \ref{main}, even in the $d=2$ case, unless one assumes a general position hypothesis as in \cite{cook}.  The problem (which was already observed in \cite[\S 12]{gauss}, see also \cite{cook}) is that, in contrast to the Gaussian primes ${\mathcal P}[i]$, the Cartesian square ${\mathcal P}^2$ of the rational primes have a number of strong self-correlations that prevent the construction of a two-dimensional weight function $\nu$ that was both concentrated on ${\mathcal P}^2$ and obeyed pseudorandomness conditions analogous to those used in \cite{gt-primes} or \cite{gauss} (or even the reduced conditions in \cite{cfz}).  More specifically, given four integers $a,b,c,d$ drawn at random from a suitable probability distribution, there is a very strong correlation between the events $(a,c) \in {\mathcal P}^2$, $(b,c) \in {\mathcal P}^2$, $(a,d) \in {\mathcal P}^2$, $(b,d) \in {\mathcal P}^2$, since any three of these events imply the fourth.  This correlation (which is not present at all in the case of Gaussian primes) appears to defeat any attempt to directly mimic the arguments from \cite{gt-primes} or \cite{gauss}.  However, simultaneous with our work, Cook, Magyar, and Titichetrakun \cite{cookt} have found a way around this difficulty by obtaining a more general version of the relative hypergraph removal lemma established in \cite{gauss}, in which the hypergraph considered is no longer assumed to be $k$-uniform for a fixed $k$, and now has weights attached to many levels of uniformity in the hypergraph, rather than solely at the $k$-uniform level. The very recent results in \cite{cfz} on relaxing the hypotheses needed for the transference principle may also be useful in this regard, although some modification of their approach would still be needed to deal with the obstructions mentioned above.

In view of these obstructions, we adopt a slightly different strategy, based on an observation in an unpublished note \cite{corr} of the first author that one could rather easily obtain a relative version of Szemer\'edi's theorem (Theorem \ref{sz-thm}) provided that the weight $\nu$ used obeyed an \emph{infinite} number of linear forms conditions, as opposed to finitely many linear forms conditions combined with a correlation condition as in \cite{gt-primes} or \cite{gauss}, by obtaining a weighted generalisation of the Furstenberg correspondence principle \cite{furstenberg} connecting combinatorial theorems such as Theorem \ref{sz-thm} with multiple recurrence theorems in ergodic theory.  At the time, this observation was of limited use since no constructions were known of weights $\nu$ concentrated on the primes which obeyed an infinite number of linear forms conditions.  (The sieve weights used in \cite{gt-primes} are only known to obey a finite number of linear forms conditions, with the number of such conditions depending on the level of the sieve employed.)  However, thanks to the (rather lengthy) arguments by Green and the authors \cite{linprimes}, \cite{mobius}, \cite{gtz}, it is now known that the primes themselves (or more precisely, a certain weight function concentrated on a rescaled version of the primes) obey infinitely many linear forms conditions, thus making the arguments in \cite{corr} usable for applications to the primes.   It turns out that this strategy not only works to give another proof\footnote{Note though that the current proof of the results in \cite{linprimes} relies heavily on the same machinery that was used in \cite{gt-primes} to prove Theorem \ref{gt-thm}, so this is not a genuinely independent new proof of that theorem.  However, if a new proof of the results in \cite{linprimes} was discovered that avoided the methods of \cite{gt-primes}, then by combining that proof with the arguments in this paper one would be able to also obtain a new proof of Theorem \ref{gt-thm}.} of Theorem \ref{gt-thm}, but extends without much additional difficulty to the multi-dimensional case, and in particular will be used here to establish Theorem \ref{main}.  This yields an argument which is shorter than that in \cite{cookt}, but is significantly less self-contained due to its heavy reliance on the work in \cite{linprimes}, \cite{mobius}, \cite{gtz}.  

The authors thank David Conlon, Le Thai Hoang and Akos Magyar for some useful comments, and to the anonymous referee for many helpful suggestions and corrections.

\section{The Furstenberg correspondence principle}

The original proof in \cite{fk0} of the multi-dimensional Szemer\'edi theorem (Theorem \ref{fk-thm}) is deduced from the following recurrence theorem:

\begin{definition}[$\Z^d$-system]  Let $d \geq 1$ be a natural number.  A \emph{$\Z^d$-system} is a quadruplet $(X, {\mathcal B}, \mu, (T_h)_{h \in \Z^d})$, where $(X, {\mathcal B}, \mu)$ is a probability space (thus ${\mathcal B}$ is a $\sigma$-algebra on $X$ and $\mu$ is a countably additive probability measure on ${\mathcal B}$) and for each $h \in \Z^d$, $T_h: X \to X$ is a measure-preserving bijection with $T_{h+h'} = T_h T_{h'}$ for all $h,h' \in \Z^d$.
\end{definition}

\begin{theorem}[Multiple recurrence theorem]\label{mrt}\cite[Theorem A]{fk0} Let $(X, {\mathcal B}, \mu, (T_h)_{h \in \Z^d})$ be a $\Z^d$-system for some $d \geq 1$, let $E$ be an element of ${\mathcal B}$ with $\mu(E)>0$, and let $v_1,\ldots,v_k \in \Z^d$.  Then there exists a natural number $r>0$ such that
$$ \mu( T_{rv_1} E \cap \ldots \cap T_{rv_k} E ) > 0.$$
\end{theorem}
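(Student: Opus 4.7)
The plan is to strengthen Theorem \ref{mrt} to the quantitative statement
\[
\liminf_{N\to\infty} \frac{1}{N}\sum_{r=1}^{N} \mu\bigl(T_{rv_1}E \cap \ldots \cap T_{rv_k}E\bigr) > 0,
\]
from which the existence of at least one positive $r$ making the intersection non-null is immediate. Following Furstenberg and Katznelson, I would call a $\Z^d$-system \emph{SZ for the tuple $v_1,\ldots,v_k$} if this lower bound holds for every $E \in \mathcal{B}$ with $\mu(E)>0$, and aim to show that every $\Z^d$-system is SZ.

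The strategy is a transfinite induction up a \emph{Furstenberg tower} for the $\Z^d$-system: a decomposition of $X$ as an inverse limit of a (possibly transfinite) sequence of successive primitive extensions over the trivial one-point factor, each of which is either compact (isometric) or weakly mixing relative to its base. The base case is immediate since on the trivial factor $\mu\bigl(\bigcap_i T_{rv_i}E\bigr)=\mu(E)$ for all $r$. For weakly mixing extensions, an iterated van der Corput / Hilbert-space orthogonality argument shows that any factor-orthogonal component of $1_E$ contributes negligibly to the multiple average in $L^2$, so the desired lower bound descends from the base factor. Closure under inverse limits is a routine $L^2$-approximation argument on indicators.

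The main obstacle is the compact extension step: one must show that if $\pi\colon X\to Y$ is a compact extension of $\Z^d$-systems and $Y$ is SZ, then so is $X$. Given $E\subset X$ with $\mu(E)>0$, the idea is to project down to obtain a positive-measure set in $Y$, then exploit the almost-periodic structure of the fibres of $\pi$ to align the $k$ translates $T_{rv_1}E,\ldots,T_{rv_k}E$ simultaneously on a positive density of fibres. The key technical input is a finite-dimensional approximation to the isometric fibre action combined with a Fubini / measurable selection argument on $Y$: a pigeonhole over the finite approximation produces a positive-measure subset of $Y$ on which the fibre shifts all lie in a small neighbourhood of the identity in the structure group, and applying SZ on $Y$ to this subset yields the required lower bound upstairs. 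I expect this inheritance through compact extensions — which forces one to track $k$ shifts in parallel through the compact fibre action uniformly in $r$ — to be the most delicate step. Combining the base case, closure under compact and weakly mixing extensions, and inverse limits establishes SZ for every $\Z^d$-system, which yields Theorem \ref{mrt}.
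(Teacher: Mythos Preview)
The paper does not prove Theorem \ref{mrt} at all; immediately after the statement it says ``We will not reprove Theorem \ref{mrt} here, but instead use it as a `black box'.'' The result is simply quoted from Furstenberg--Katznelson \cite{fk0}.

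Your outline is a faithful high-level sketch of the Furstenberg--Katznelson argument, so in that sense it is correct and appropriate. One caution on the compact-extension step: the description ``a pigeonhole over the finite approximation produces a positive-measure subset of $Y$ on which the fibre shifts all lie in a small neighbourhood of the identity'' understates what is needed in the $\Z^d$ setting. Unlike the $d=1$ case, one cannot simply wait for a single return time in the fibre; one must simultaneously control $k$ commuting shifts, and the argument in \cite{fk0} actually invokes a colouring result (essentially the multidimensional van der Waerden theorem, or equivalently an induction on $k$ together with the SZ property of the base) to find the required alignment. A bare pigeonhole on a finite $\varepsilon$-net of the fibre does not by itself produce a positive-density set of good $r$. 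If you were to flesh out the proof rather than cite it, this is where the real work would be.
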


We will not reprove Theorem \ref{mrt} here, but instead use it as a ``black box''.

The deduction of Theorem \ref{fk-thm} from Theorem \ref{mrt} is an immediate consequence of the \emph{Furstenberg correspondence principle} \cite{furstenberg} for the group $\Z^d$, which we now state and prove.

\begin{proposition}[Unweighted Furstenberg correspondence principle]\label{fcp}  Let $d \geq 1$ be a fixed natural number.  Let $N = N_n$ be a sequence of natural numbers going to infinity, and write $[N_n]$ for the set $\{ m \in \Z: 1\leq m \leq N_n\}$.  For each $n$, let $A_n$ be a subset of $[N_n]^d$ such that one has the lower bound
\begin{equation}\label{lions}
 \limsup_{n \to \infty} \E_{a \in [N_n]^d} 1_{A_n}(a) \geq \delta
\end{equation}
 on the upper density of $A_n$ for some $\delta > 0$, where we use the averaging notation $\E_{a \in S} f(a) := \frac{1}{|S|} \sum_{a \in S} f(a)$ for any finite non-empty set $S$.  Then there exists a $\Z^d$-system $(X, {\mathcal B}, \mu, (T_h)_{h \in \Z^d})$ and a set $E \in {\mathcal B}$ such that
$$ \mu(E) \geq \delta$$
and such that whenever $h_1,\ldots,h_k \in \Z^d$ are a finite number of shifts such that
$$ \mu( T_{h_1} E \cap \ldots \cap T_{h_k} E) > 0$$
then for a sequence of $n$ tending to infinity, there exists $a \in [N_n]^d$ such that
$$ a+h_1, \ldots, a+h_k \in A_n.$$
\end{proposition}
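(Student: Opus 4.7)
The plan is the standard symbolic coding and empirical-measure construction. Take $X := \{0,1\}^{\Z^d}$ with the product topology (so $X$ is compact and metrizable), let $\mathcal{B}$ be its Borel $\sigma$-algebra, and let the $\Z^d$-action be the shift $(T_h x)(g) := x(g-h)$, so that $T_{h+h'} = T_h T_{h'}$. Set $E := \{x \in X : x(0) = 1\}$, which is a clopen cylinder; for any $h \in \Z^d$ one has $T_h E = \{x : x(h) = 1\}$, and any finite intersection $T_{h_1} E \cap \dots \cap T_{h_k} E$ depends only on finitely many coordinates and hence is clopen.

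For each $n$, view $1_{A_n}$ as a point $x_n \in X$ by extending the indicator of $A_n$ by zero outside $[N_n]^d$, and form the empirical measures
\[
\mu_n := \frac{1}{N_n^d} \sum_{a \in [N_n]^d} \delta_{T_{-a} x_n}
\]
on $X$. A direct unwinding of the definitions yields $\mu_n(E) = |A_n|/N_n^d$ and, more generally,
\[
\mu_n\Bigl(\bigcap_{j=1}^k T_{h_j} E\Bigr) = \frac{1}{N_n^d}\bigl|\{a \in [N_n]^d : a + h_j \in A_n \text{ for all } j\}\bigr|,
\]
so the second quantity is precisely the normalized count of constellations of shape $(h_1,\dots,h_k)$ in $A_n$ based at points $a \in [N_n]^d$.

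Next I would invoke weak-$*$ compactness of probability measures on the compact metrizable space $X$ to pass to a subsequence along which $\mu_n$ converges weakly to some Borel probability measure $\mu$, while simultaneously ensuring (via a further subsequence) that $\mu_n(E)$ converges to its $\limsup$, which is at least $\delta$ by \eqref{lions}. Since $E$ and each $\bigcap_j T_{h_j} E$ are clopen, weak convergence gives $\mu(E) \geq \delta$ and $\mu_n(\bigcap_j T_{h_j} E) \to \mu(\bigcap_j T_{h_j} E)$. The limit $\mu$ is $\Z^d$-invariant: for any fixed $h \in \Z^d$ and any fixed cylinder $B$, translating the averaging window from $[N_n]^d$ to $[N_n]^d + h$ changes $\mu_n(B)$ by at most $O(|h|/N_n) \to 0$, so shift-invariance holds on cylinders in the limit, and then on all of $\mathcal{B}$ by a standard monotone class/approximation argument.

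The conclusion is then immediate: if $\mu(\bigcap_j T_{h_j} E) > 0$, then by clopen convergence $\mu_n(\bigcap_j T_{h_j} E) > 0$ for infinitely many $n$ in the subsequence, which by the counting identity above produces $a \in [N_n]^d$ with $a + h_j \in A_n$ for every $j$. The only mildly technical step, and the one I would regard as the main obstacle, is the verification of shift-invariance of $\mu$, i.e.\ properly estimating the boundary contributions that come from averaging over the finite box $[N_n]^d$ rather than over all of $\Z^d$; everything else is routine compactness together with the fact that finite-coordinate cylinders are clopen.
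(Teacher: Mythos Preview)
Your proof is correct and essentially identical to the paper's: the same symbolic space $X=\{0,1\}^{\Z^d}\cong 2^{\Z^d}$, the same clopen cylinder $E$, and the same empirical measures $\mu_n$ (the paper's $B_{a,n}$ is exactly your $T_{-a}x_n$ under the identification of a set with its indicator). The only cosmetic difference is that you extract a weak-$*$ convergent subsequence whereas the paper passes to the limit via a Banach limit functional; the paper explicitly remarks that Prokhorov would work just as well here, the Banach-limit choice being made only because the same functional must be reused across an infinite family of measures $\mu_\Omega$ in the weighted version (Proposition~\ref{fcp-weight}).
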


Indeed, if Theorem \ref{fk-thm} fails, then one could find $\delta > 0$ and $v_1,\ldots,v_k \in \Z^d$, a sequence $N_n$ going to infinity, and subsets $A_n$ of $[N_n]^d$ obeying \eqref{lions}, but such that $A_n$ contained no constellation of the form $a+rv_1,\ldots,a+rv_k$ for $a \in \Z^d$ and $r > 0$; applying Proposition \ref{fcp} followed by Theorem \ref{mrt} gives the desired contradiction.

\begin{proof}   By passing to a subsequence of $n$ (and relabeling $n$) we may assume that the limiting density of $A_n$ exists, thus
\begin{equation}\label{nd}
 \lim_{n \to \infty} \E_{a \in [N_n]^d} 1_{A_n}(a) \geq \delta.
\end{equation}

Let 
$$X := 2^{\Z^d} := \{ B: B \subset \Z^d\}$$
be the space of all subsets $B$ of $\Z^d$.  By Tychonoff's theorem, this is a compact Hausdorff topological space, endowed with the product $\sigma$-algebra ${\mathcal B}$ (that is to say, the $\sigma$-algebra generated by the basic cylinder sets $\{ B \in X: b \in B \}$ for $b \in \Z^d$).  This space has a continuous action $(T_h)_{h \in \Z^d}$ of $\Z^d$, defined by
$$ T_h B := B+h$$
for all $h \in \Z^d$ and $B \in X$.  We also define $E \in {\mathcal B}$ to be the basic cylinder set
\begin{equation}\label{e-def}
 E := \{ B \in X: 0 \in B \};
 \end{equation}
note that this is a clopen subset of $X$.

It remains to construct a shift-invariant probability measure $\mu$ on $X$ with the required properties.  This will be constructed as the weak limit of (a subsequence of) a sequence $\mu_n$ of discrete probability measures, defined as follows.  For each $n$ and $a \in [N_n]^d$, we let $B_{a,n} \in X$ be the set
$$ B_{a,n} := \{ b \in \Z^d: a+b \in A_n \}$$
(thus $B_{a,n}$ is the shift of $A_n$ by $-a$) and then set
$$ \mu_n := \E_{a \in [N_n]^d} \delta_{B_{a,n}},$$
where $\delta_{B_{a,n}}$ is the Dirac measure on $X$ at the point $B_{a,n}$.  Then $\mu_n$ is a probability measure; indeed, it is the law of the random set\footnote{Here, by ``random set'', we mean a set that is also a random variable, although the events $i \in B_{a,n}$ for different $i$ are not indepenent for this particular set-valued random variable.} $B_{a,n}$ when $a$ is drawn uniformly at random from $[N_n]^d$.

We now form a limit point $\mu$ of the probability measures $\mu_n$.  One could invoke Prokhorov's theorem at this point to obtain a subsequence $\mu_{n_j}$ that converges in the vague topology to a limit probability measure $\mu$, but for minor technical reasons (which are not relevant for this particular argument, but will become significant in the proof of Proposition \ref{fcp-weight} below) it is convenient to proceed slightly differently using Banach limits\footnote{Note that the original proof \cite{furstenberg} of the correspondence principle also used Banach limits.}.  Let $p\!-\!\lim = p\!-\!\lim_{n \to \infty}: \ell^\infty(\N) \to \R$ be a Banach limit functional, that is to say a linear functional extending the limit functional $\lim$ on convergent sequences such that
$$ \liminf_{n \to \infty} x_n \leq p\!-\!\lim_{n \to \infty} x_n \leq \limsup_{n \to \infty} x_n$$
for all bounded sequences $x_n$.  Such a functional can be easily constructed by the Hahn-Banach theorem, or by using a non-principal ultrafilter $p \in \beta \N \backslash \N$ on the natural numbers.  We then define the measure $\mu$ to be the unique Radon measure such that
\begin{equation}\label{fmu}
 \int_X f\ d\mu := p\!-\!\lim_{n \to \infty} \int_X f\ d\mu_n
 \end{equation}
for any continuous function $f: X \to \R$; in particular,
\begin{equation}\label{muf}
\mu(F) = p\!-\!\lim_{n \to \infty} \mu_n(F)
\end{equation}
for any clopen set $F$.  From the Riesz representation theorem we see that $\mu$ is well-defined, and as each of the $\mu_n$ are probability measures, we see that $\mu$ is also.  

From construction we have
\begin{align*}
\mu_n(E) &= \E_{a \in [N_n]^d} 1_{B_{a,n}}(0)\\
&= \E_{a \in [N_n]^d} 1_{A_n}(a)
\end{align*}
and hence from \eqref{nd} and \eqref{muf} we have
$$ \mu(E) \geq \delta$$
as claimed.

Now we verify the shift invariance of $\mu$.  Observe that the pushforward $(T_h)_* \mu_n$ of $\mu_n$ by a shift $T_h$ with $h \in \Z^d$ is given by
\begin{align*}
(T_h)_* \mu_n &= \E_{a \in [N_n]^d} \delta_{B_{a,n}+h} \\
&= \E_{a \in [N_n]^d} \delta_{B_{a-h,n}}\\
&= \E_{a \in [N_n]^d - h} \delta_{B_{a,n}};
\end{align*}
in particular, as the symmetric difference of $[N_n]^d+h$ and $[N_n]^d$ has cardinality $o(N_n^d)$ for fixed $h$, we see that $(T_h)_* \mu_n - \mu_n$ converges to zero in total variation norm.  Integrating against a continuous test function and using \eqref{fmu}, we see that $(T_h)_* \mu = \mu$ for all $h$, giving the desired shift invariance.

Now suppose that $h_1,\ldots,h_k \in \Z^d$ are such that
$$ \mu( T_{h_1} E \cap \ldots \cap T_{h_k} E) > 0.$$
As $T_{h_1} E \cap \ldots \cap T_{h_k} E$ is clopen, we see from \eqref{muf} that
$$ \mu_n( T_{h_1} E \cap \ldots \cap T_{h_k} E) > 0$$
for sufficiently large $n$.  But the expression $\mu_n( T_{h_1} E \cap \ldots \cap T_{h_k} E)$ can be computed as
$$
\E_{a \in [N_n]^d} 1_{T_{h_1} E}(B_{a,n}) \ldots 1_{T_{h_k} E}(B_{a,n})
= 
\E_{a \in [N_n]^d} 1_{A_n}(a+h_1) \ldots 1_{A_n}(a+h_k)$$
and so there exists $a \in [N_n]^d$ such that $a+h_1,\ldots,a+h_k \in A_n$, as desired.
\end{proof}

\begin{remark}\label{stronger}  The above argument in fact shows that the conclusion of Proposition \ref{fcp} can be strengthened, namely that
$$ \limsup_{n \to \infty} \E_{a \in [N_n]^d} 1_{A_n}(a+h_1) \ldots 1_{A_n}(a+h_k) \geq \mu( T_{h_1} E \cap \ldots \cap T_{h_k} E)$$
for any $h_1,\ldots,h_k \in \Z^d$, and similarly if one or more of the $A_n$ (and the respective copies of $E$) are replaced by their complements.
\end{remark}

Now we give a weighted version of the above principle, which is one of the new contributions of the paper, and provides a new way to study dense sets or functions relative to pseudorandom measures in settings in which the pseudorandomness is not strong enough for existing decomposition theorems to be useful.

\begin{proposition}[Weighted Furstenberg correspondence principle]\label{fcp-weight}  Let $d \geq 1$ be a fixed dimension.  Let $N = N_n$ be a sequence of natural numbers going to infinity, and let $M_n$ be a sequence going to infinity such that $M_n = o_{n \to\infty}(N_n)$ (thus $M_n/N_n$ goes to zero as $n \to \infty$).  For brevity we abbreviate $X = o_{n \to \infty}(Y)$ by $X = o(Y)$ in the sequel.  For each $n$, suppose that we have functions $\nu_i = \nu_{i,n}: [N_n] \to R^+$ for $i=1,\ldots,d$ obeying the following \emph{linear forms condition}:
\begin{itemize}
\item (Linear forms condition) Let $m \geq 0$ be a fixed natural number, let $k_1,\ldots,k_d \geq 0$ be fixed natural numbers, and for each $1 \leq i \leq d$, let $\phi_{i,1},\ldots,\phi_{i,k_i}: \R^m \to \R$ be linear forms with integer coefficients (so, in particular, $\phi_{i,j}$ maps $\Z^m$ to $\Z$).  Suppose that for each $1 \leq i \leq d$, the linear forms $\phi_{i,j},\phi_{i,j'}$ with $1 \leq j < j' \leq k_i$ are pairwise distinct.  Then there exists a sequence $H_n = o(M_n)$ such that whenever $L_{1,n},\ldots,L_{m,n}$ are such that $H_n \leq L_{l,n} \leq M_n$ for all $1 \leq l \leq m$, one has
\begin{equation}\label{sam}
 \E_{a \in [N_n]^d} \E_{r \in \prod_{j=1}^m [L_{j,n}]} \prod_{i=1}^d \prod_{j=1}^{k_i} \nu_{i,n}(a_i + \phi_{i,j}(r) ) = 1+o(1) 
\end{equation}
where $a = (a_1,\ldots,a_d)$, and each $\nu_{i,n}$ is extended by zero outside of $[N_n]$ to $\Z$.
\end{itemize}
For each $n$, let $A_n$ be a subset of $[N_n]^d$ such that one has the lower bound
$$ \limsup_{n \to \infty} \E_{a \in [N_n]^d} 1_{A_n}(a) \prod_{i=1}^d \nu_i(a_i) \geq \delta$$
for some $\delta > 0$, where $a = (a_1,\ldots,a_d)$.  Then there exists a $\Z^d$-system $(X, {\mathcal B}, \mu, (T_h)_{h \in \Z^d})$ and a set $E \in {\mathcal B}$ such that
$$ \mu(E) \geq \delta$$
and such that whenever $h_1,\ldots,h_k \in \Z^d$ are a finite number of shifts such that
$$ \mu( T_{h_1} E \cap \ldots \cap T_{h_k} E) > 0$$
then for a sequence of $n$ tending to infinity, there exists $a \in [N_n]^d$ and $r \in [M_n]$ such that
$$ a+rh_1, \ldots, a+rh_k \in A_n.$$
\end{proposition}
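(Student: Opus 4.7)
Mirror the proof of Proposition \ref{fcp} by introducing the weighted empirical measures
$$\mu_n := \E_{a \in [N_n]^d} \E_{r \in [M_n]} \left(\prod_{i=1}^d \nu_{i,n}(a_i)\right) \delta_{B_{a,r,n}}, \qquad B_{a,r,n} := \{b \in \Z^d : a + rb \in A_n\},$$
on $X = 2^{\Z^d}$. The inclusion of the scale $r$ is the critical new ingredient: the canonical action $T_h B := B + h$ on $X$ then recovers the scaled shift $a \mapsto a + rh$ on the ambient set, since one checks $1_{T_h E}(B_{a,r,n}) = 1_{A_n}(a + rh)$. The trivial case of the linear forms hypothesis (with $m=1$, $k_i = 1$, $\phi_{i,1}=0$) gives $\mu_n(X) = 1+o(1)$, so the $\mu_n$ are approximately probability measures, and a Banach limit as in \eqref{fmu} extracts a probability measure $\mu$ on $X$.

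The density bound $\mu(E) \geq \delta$ follows from $\mu_n(E) = \E_a \prod_i \nu_{i,n}(a_i) 1_{A_n}(a) \geq \delta - o(1)$, and the multi-recurrence conclusion follows exactly as in Proposition \ref{fcp}: if $\mu\!\left(\bigcap_j T_{h_j} E\right) > 0$ then clopenness of this intersection propagates positivity to
$$\mu_n\!\left(\bigcap_j T_{h_j} E\right) = \E_{a, r} \prod_i \nu_{i,n}(a_i) \prod_j 1_{A_n}(a + r h_j) > 0$$
along a subsequence, so non-negativity of the $\nu_{i,n}$ and of $1_{A_n}$ yields some $(a, r) \in [N_n]^d \times [M_n]$ realizing the constellation $a + rh_j \in A_n$.

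The principal obstacle is establishing $\Z^d$-invariance of $\mu$. By Stone-Weierstrass this reduces to showing $\mu_n(T_{-h} F) - \mu_n(F) \to 0$ for each basic cylinder $F = \{B : T \subset B\}$ with finite $T \subset \Z^d$, and substituting $a \mapsto a + rh$ in the first expectation (absorbing the geometric shift into a shift of the weight, up to an $o(1)$ boundary correction handled by the one-dimensional linear forms condition on strips of $[N_n]$ of thickness $O(M_n\|h\|)$) reduces this in turn to
$$\E_{a, r}\!\left(\prod_i \nu_{i,n}(a_i + rh_i) - \prod_i \nu_{i,n}(a_i)\right) \prod_{s \in T} 1_{A_n}(a + rs) = o(1).$$
I expect this to be the hardest step. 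The natural attack is an iterated Cauchy-Schwarz / variable-duplication scheme (reminiscent of Gowers-norm manipulations), which after sufficiently many iterations reduces the estimate to signed alternating sums of pure $\nu$-product averages at pairwise distinct linear forms in the duplicated $a$-variables and $r$. Each such average is $1 + o(1)$ by the linear forms hypothesis, with the signed cancellation delivering the $o(1)$ bound; the degenerate case of vanishing $h_i$'s is first handled by telescoping in the index $i$. This is precisely where the \emph{infinite} nature of the linear forms condition is essential: each Cauchy-Schwarz iteration introduces additional $\nu$-factors (scaling with both $|T|$ and the telescoping depth), so no bounded collection of linear forms conditions would suffice.
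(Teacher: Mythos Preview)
Your plan has a genuine gap: the limiting measure you build from
\[
\mu_n := \E_{a \in [N_n]^d}\,\E_{r \in [M_n]} \Bigl(\prod_{i=1}^d \nu_{i,n}(a_i)\Bigr)\,\delta_{B_{a,r,n}}
\]
is \emph{not} shift-invariant, so the Cauchy--Schwarz programme you outline cannot terminate in a true $o(1)$ bound. To see this, take $d=1$, let $\nu = p^{-1} 1_P$ for a pseudorandom set $P\subset [N]$ of density $p=p_N\to 0$ (so the linear forms condition holds), and set $A_n = P$. Then
\[
\mu_n(E)=\E_a\,\nu(a)1_P(a)=p^{-1}\E_a 1_P(a)=1+o(1),
\]
whereas for $h\neq 0$
\[
\mu_n(T_{-h}E)=\E_{a,r}\,\nu(a)1_P(a-rh)=p^{-1}\E_{a,r}1_P(a)1_P(a-rh)=p^{-1}\cdot p^2(1+o(1))=p+o(1)\to 0,
\]
using two-point pseudorandomness of $P$. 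Hence $\mu(E)=1$ but $\mu(T_{-h}E)=0$. Equivalently, your displayed ``target estimate''
\[
\E_{a,r}\bigl(\nu(a+rh)-\nu(a)\bigr)\prod_{s\in T}1_{A_n}(a+rs)=o(1)
\]
is simply \emph{false} here (take $T=\{0\}$), so no amount of Cauchy--Schwarz will save it. The underlying reason is that your weight $\prod_i\nu_i(a_i)$ pins the configuration only at $b=0$; the event $T_h E=\{h\in B\}$ lives at a different lattice site and the weight does not see it.

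The paper repairs this by working not with a single weighted empirical measure but with a \emph{family} $\mu_{\Omega,n}$ indexed by tuples $\Omega=(\Omega_1,\dots,\Omega_d)$ of finite subsets of $\Z$, carrying the richer weight $\prod_i\prod_{c\in\Omega_i}\nu_i(a_i+cr)$. The hard step (and the true home of the iterated Cauchy--Schwarz you anticipated) is a \emph{compatibility} statement: for $\Omega\subset\Omega'$ the limits $\mu_\Omega$ and $\mu_{\Omega'}$ agree on cylinder sets supported in $\prod_i\Omega_i$. This works precisely because, in the difference $\mu_{\Omega',n}(F)-\mu_{\Omega,n}(F)$, the oscillating factor is $\nu_{i_0}(a_{i_0}+c_*r)-1$ with $c_*\notin\Omega_{i_0}$, so after a suitable extra averaging that factor depends on strictly more auxiliary variables than every $1_{A_n}$ or $\nu_i$ factor --- the structural hypothesis needed for the weighted generalised von Neumann inequality. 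A second limit $\Omega\to(\Z,\dots,\Z)$ then yields the final $\mu$; shift-invariance becomes essentially a change of variables $\mu_{\Omega+h}(T_hF)=\mu_\Omega(F)+o(1)$, with compatibility gluing the pieces together. Your scheme collapses to the single case $\Omega_i=\{0\}$, and that case alone does not carry enough weight to encode shifts.
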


One should view \eqref{sam} as encompassing an infinite number of linear forms conditions of the type introduced in \cite{gt-primes}, because the parameters $k_1,\ldots,k_d$ are allowed to be arbitrarily large.

We now prove this proposition. By passing to a subsequence, we may assume that
\begin{equation}\label{nd-2}
\liminf_{n \to \infty} \E_{a \in [N_n]^d} 1_{A_n}(a) \prod_{i=1}^d \nu_i(a_i) \geq \delta.
\end{equation}

We define the compact space $X$, the $\sigma$-algebra ${\mathcal B}$, the set $E$, and the shifts $(T_h)_{h \in \Z^d}$ exactly as in the proof of Proposition \ref{fcp}; the only difference in the argument is in the construction of the probability measure $\mu$, which involves an additional averaging over dilations (cf. the classical argument\footnote{A similar averaging was also employed in the very recent preprint \cite{fox}.} of Varnavides \cite{var}).  For each $a \in [N_n]^d$ and $r \in [M_n]$, we let $B_{a,r,n} \in X$ be the set
$$ B_{a,r,n} := \{ b \in \Z^d: a+rb \in A_n \}.$$
For every tuple $\Omega = (\Omega_1,\ldots,\Omega_d)$ of finite subsets of $\Omega_1,\ldots,\Omega_d$ of $\Z$, we then define the discrete non-negative finite measures $\mu_{\Omega,n}$ on $X$ by the formula
\begin{equation}\label{mosh}
 \mu_{\Omega,n} := \E_{a \in [N_n]^d} \E_{r \in [M_n]} \delta_{B_{a,r,n}} \prod_{i=1}^d \prod_{c_i \in \Omega_i} \nu_{i,n}( a_i + c_i r ),
\end{equation}
where we extend each $\nu_{i,n}$ by zero outside of $[N_n]$.  From \eqref{sam} we have
$$
\E_{a \in [N_n]^d} \E_{r \in [M_n]} \prod_{i=1}^d \prod_{c_i \in \Omega_i} \nu_{i,n}( a_i + c_i r ) = 1 + o(1)$$
and thus the $\mu_{\Omega,n}$ are asymptotically probability measures in the sense that
\begin{equation}\label{muon}
 \mu_{\Omega,n}(X) = 1 + o(1)
 \end{equation}
for each fixed choice of $\Omega$.

We now select a Banach limit functional $p\!-\!\lim = p\!-\!\lim_{n \to \infty}$ as in the previous argument, and define the limit measures $\mu_\Omega$ for each tuple $\Omega$ of finite subsets of $\Z$ by the formula
\begin{equation}\label{fmu-omega}
 \int_X f\ d\mu_\Omega = p\!-\!\lim_{n \to \infty}  \int_X f\ d\mu_{\Omega,n}
 \end{equation}
for any continuous function $f: X \to \R$; in particular,
\begin{equation}\label{muf-omega}
\mu_\Omega(F) = p\!-\!\lim_{n \to \infty}  \mu_{\Omega,n}(F)
\end{equation}
for any clopen set $F$.  From the Riesz representation theorem and \eqref{muon} we see that each $\mu_\Omega$ is well-defined and is a probability measure.  We make the technical remark that it will be useful in later arguments that the same limit functional $p\!-\!\lim$ is used to define each of the $\mu_\Omega$; this is the main reason why we work using limit functionals instead of simply invoking Prokhorov's theorem to construct each $\mu_\Omega$ separately.

Next, we define a second limit measure $\mu$ by the formula
$$
 \int_X f\ d\mu = p\!-\!\lim_{m \to \infty}  \int_X f\ d\mu_{([-m,m],\ldots,[-m,m])}$$
for all continuous functions $f: X \to \R$, where $[-m,m] := \{-m,\ldots,m\}$, and we have used a different variable here for the Banach limit to avoid confusion with the $n$ variable.  Again, from the Riesz representation theorem we see that $\mu$ is well-defined and is a probability measure.

\begin{remark} Informally, as a first approximation one can roughly view $\mu_{\Omega,n}$ as a random set $B_{a,r,n}$, where the tuple $(a,r) \in [N_n]^d \times [M_n]$ has been conditioned to the event that $a_i + c_i r$ lies in the support of $\nu_i$ (or the region where $\nu_i$ is concentrating on) for each $1 \leq i \leq r$ and $c_i \in \Omega_i$.  The measure $\mu$ is a double limit of this random variable in which $n$ has been sent to infinity and the $\Omega_i$ have been sent to all of $\Z$, but it is important that the limit in $n$ be taken first in order to obtain a useful probability measure $\mu$.
\end{remark}

To understand the properties of $\mu$, we need to use the linear forms condition to establish a crucial compatibility relationship between the $\mu_\Omega$.

\begin{proposition}[Compatibility]\label{compact}  Let $\Omega = (\Omega_1,\ldots,\Omega_d)$ and $\Omega' = (\Omega'_1,\ldots,\Omega'_d)$ be tuples of finite subsets of $\Z$ such that $\Omega_i \subset \Omega'_i$ for all $i$.  Let ${\mathcal B}_\Omega$ denote the finite sub-$\sigma$-algebra of ${\mathcal B}_{\Omega'}$ generated by the basic cylinder sets $\{ B \in X: b \in B \}$ for $b \in \prod_{i=1}^d \Omega_i$.  Then we have
$$ \mu_\Omega(F) = \mu_{\Omega'}(F)$$ 
for all $F \in {\mathcal B}_\Omega$.
\end{proposition}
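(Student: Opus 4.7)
The plan is to reduce compatibility to a weighted generalized von Neumann estimate and then establish that estimate by iterated Cauchy--Schwarz driven by the linear forms condition. Since $\B_\Omega$ is the finite $\sigma$-algebra generated by the clopen cylinders $\{B \in X : b \in B\}$ for $b$ in the finite set $\prod_i \Omega_i$, its atoms are indexed by maps $\epsilon : \prod_i \Omega_i \to \{0,1\}$ and each $1_{F_\epsilon}$ expands by inclusion--exclusion into a signed linear combination of products $\prod_{b \in T} \pi_b$, where $\pi_b(B) := 1_B(b)$ and $T$ ranges over subsets of $\prod_i \Omega_i$. By linearity, it therefore suffices to verify that
\[
\int_X \prod_{b \in T} \pi_b \, d\mu_\Omega = \int_X \prod_{b \in T} \pi_b \, d\mu_{\Omega'}
\]
for every such finite $T$. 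Using \eqref{mosh} and the identity $\pi_b(B_{a,r,n}) = 1_{A_n}(a+rb)$, both sides become Banach limits of explicit finite averages over $(a,r) \in [N_n]^d \times [M_n]$.

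Introducing the abbreviations $P_\Sigma(a,r) := \prod_{(i,c) \in \Sigma} \nu_{i,n}(a_i + cr)$ and $\Phi_n(a,r) := \prod_{b \in T} 1_{A_n}(a + rb) \in \{0,1\}$, subtracting the two sides above and telescoping the difference $P_{\Omega'}(a,r) - P_\Omega(a,r)$ into contributions each containing exactly one centred factor $(\nu_{i_0,n}(a_{i_0}+c^*r) - 1)$ indexed by $(i_0,c^*) \in \Omega' \setminus \Omega$, the task reduces to showing that
\[
T_n := \E_{a \in [N_n]^d,\, r \in [M_n]}\, \Phi_n(a,r)\, P_\Sigma(a,r)\, \bigl(\nu_{i_0,n}(a_{i_0}+c^*r) - 1\bigr) = o(1)
\]
for every intermediate set $\Sigma$ with $\Omega \subset \Sigma \subset \Omega' \setminus \{(i_0,c^*)\}$, where crucially $c^* \notin \Sigma_{i_0}$.

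To prove $T_n = o(1)$ I would iterate Cauchy--Schwarz in the style of the Green--Tao generalized von Neumann theorem. A first Cauchy--Schwarz in $a$ duplicates $r$ into an independent pair $(r,r') \in [M_n]^2$ and yields
\[
|T_n|^2 \leq \E_{a,r,r'}\, \Phi_n(a,r)\Phi_n(a,r')\, P_\Sigma(a,r)\, P_\Sigma(a,r')\, \bigl(\nu_{i_0,n}(a_{i_0}+c^*r)-1\bigr)\bigl(\nu_{i_0,n}(a_{i_0}+c^*r')-1\bigr);
\]
further Cauchy--Schwarz steps, each introducing a fresh independent copy of $r$ to symmetrize the shifts entering one remaining $\nu$-factor, eventually bound $|T_n|^{2^K}$ by a Gowers-norm-type sum of products of $(\nu-1)$ factors at the vertices of a combinatorial cube in the duplicated variable space. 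Applying the linear forms condition with $m$ sufficiently large and the $k_i$ chosen to match the number of shifts per coordinate identifies each term in the resulting expansion as $1+o(1)$, so the signed sum over the cube vanishes to leading order and gives the desired $o(1)$ bound. The main obstacle (and the reason for iterating rather than applying a single Cauchy--Schwarz) is that squaring the factor $\nu_{i_0,n}(a_{i_0}+c^*r)$ would produce a term with two coincident shifts in coordinate $i_0$, violating the pairwise-distinctness hypothesis of the linear forms condition; iteratively introducing fresh independent copies of variables in place of squaring preserves the distinctness needed to apply the condition. The hypothesis $c^* \notin \Sigma_{i_0}$ is what keeps the newly introduced shift $c^*$ distinct from those already present in $P_\Sigma$, and $M_n = o(N_n)$ absorbs the edge-effect errors from the finite boxes at each step.
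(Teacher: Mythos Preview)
Your reduction via inclusion--exclusion and telescoping to the centred average $T_n = o(1)$ is correct and matches the paper. The gap lies in the Cauchy--Schwarz mechanism. After your first step (Cauchy--Schwarz in $a$, duplicating $r \to (r,r')$) the right-hand side still contains all of the factors $1_{A_n}(a+rb)$ together with new copies $1_{A_n}(a+r'b)$; each subsequent duplication of $r$ doubles rather than eliminates them. One cannot simply bound $\Phi_n \Phi_n' \leq 1$ at this stage either, since the remaining integrand $P_\Sigma(a,r)P_\Sigma(a,r')(\nu-1)(a_{i_0}+c^*r)(\nu-1)(a_{i_0}+c^*r')$ is not pointwise nonnegative, so no upper bound follows. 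The generalised von Neumann machinery requires that each bounded-by-$\nu$ factor be independent of at least one averaging variable while the centred factor depends on all of them; in your variables $(a,r,r',\ldots)$ every $1_{A_n}(a+rb)$ still depends on $a$, and no amount of duplicating $r$ changes that. (A further symptom: if $0 \in \Sigma_i$, your duplication already produces $\nu_i(a_i)^2$, a coincident form the linear forms condition cannot handle---the very difficulty you correctly flag but do not actually avoid.)

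The paper's proof supplies the missing idea. Before any Cauchy--Schwarz one introduces auxiliary parameters $h_b \in [H]$, one for each $b \in B := \prod_i \Omega_i$, by averaging $F(a,r)$ over the translates $F\bigl(a - \sum_{b \in B} b h_b,\, r + \sum_{b \in B} h_b\bigr)$; a separate shift-invariance lemma (using $H = o(M_n)$ and the linear forms condition to control boundary contributions) shows this changes the average only by $o(1)$. After this substitution, $1_{A_n}\bigl(a + rb' + \sum_{b \neq b'}(b'-b)h_b\bigr)$ is independent of $h_{b'}$, each $\nu_i$ factor is independent of an entire block of $h_b$'s, and the centred $(\nu_{i_0,n}-1)$ factor depends on \emph{every} $h_b$ (precisely because $c^* \notin \Omega_{i_0}$). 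This is exactly the hypercube structure to which the weighted generalised von Neumann inequality \cite[Corollary~B.4]{gt-primes} applies, placing $(\nu_{i_0,n}-1)$ inside a box norm that the linear forms condition shows is $o(1)$; pairwise distinctness of the resulting forms is then automatic, since distinct factors involve distinct sets of variables $h_b^{(\omega)}$ with nonzero coefficients.
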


\begin{proof}  By iteration, it suffices to establish the claim when $\Omega'_i = \Omega_i$ for all $i$ except for one $i_0 \in \{1,\ldots,d\}$, with $\Omega'_{i_0} = \Omega_{i_0} \cup \{c_*\}$
for some integer $c_*$ outside of $\Omega_{i_0}$.

It suffices to establish the claim for sets $F$ of the form
\begin{equation}\label{fbo}
 F = \{ B \in X: B_0 \subset B \}
 \end{equation}
for some subset $B_0$ of $\prod_{i=1}^d \Omega_i$, since from the inclusion-exclusion formula this then gives the claim for sets of the form
$$ F = \{ B \in X: B \cap \prod_{i=1}^d \Omega_i = B_0 \}$$
and all other sets in ${\mathcal B}_\Omega$ can be expressed as finite unions of sets of the above type.  Note that if $F$ is of the form \eqref{fbo}, then $F$ is clopen, so by \eqref{muf-omega}
$$\mu_\Omega(F) = p\!-\!\lim_{n \to \infty}  \mu_{\Omega,n}(F)$$
and
$$\mu_{\Omega'}(F) = p\!-\!\lim_{n \to \infty}  \mu_{\Omega',n}(F)$$
so it will suffice to show that
$$ \mu_{\Omega',n}(F) - \mu_{\Omega,n}(F) = o(1)$$
for fixed choices of $F, \Omega, \Omega'$.

From \eqref{mosh} and \eqref{fbo} we see that
\begin{equation}\label{liar}
 \mu_{\Omega,n}(F) := \E_{a \in [N_n]^d} \E_{r \in [M_n]} \left(\prod_{b \in B_0} 1_{A_n}(a+rb)\right) \prod_{i=1}^d \prod_{c_i \in \Omega_i} \nu_{i,n}( a_i + c_i r )
\end{equation}
and similarly
$$
 \mu_{\Omega',n}(F) := \E_{a \in [N_n]^d} \E_{r \in [M_n]} \left(\prod_{b \in B_0} 1_{A_n}(a+rb)\right) \prod_{i=1}^d \prod_{c_i \in \Omega'_i} \nu_{i,n}( a_i + c_i r ).
$$
From the hypothesis on $\Omega'$, we may thus write $\mu_{\Omega',n}(F) - \mu_{\Omega,n}(F)$ as
$$
\E_{a \in [N_n]^d} \E_{r \in [M_n]} (\prod_{b \in B_0} 1_{A_n}(a+rb)) \left(\prod_{i=1}^d \prod_{c_i \in \Omega_i} \nu_{i,n}( a_i + c_i r )\right)
(\nu_{i_0,n}(a_{i_0}+c_* r)-1).$$
Suppressing all explicit dependence on $n$, our task is now to show that
\begin{equation}\label{bando}
\E_{a \in [N]^d} \E_{r \in [M]} F(a,r) = o(1),
\end{equation}
where 
\begin{equation}\label{far-def} F(a,r) := \left(\prod_{b \in B_0} 1_{A}(a+rb)\right) \left(\prod_{i=1}^d \prod_{c_i \in \Omega_i} \nu_{i}( a_i + c_i r )\right)
(\nu_{i_0}(a_{i_0}+c_* r)-1).
\end{equation}
To verify this, we first make a technical observation

\begin{lemma} Let  $H = H_n = o(M)$.  Then we have
\begin{equation}\label{shift-2}
\E_{a \in [N]^d} \E_{r \in [M]} F(a+b,r+h) = \E_{a \in [N]^d} \E_{r \in [M]} F(a,r) + o(1)
\end{equation}
whenever $|b|, |h| \leq H$, uniformly in $b,h$.
\end{lemma}

\begin{proof} By the triangle inequality, it suffices to show that 
\begin{equation}\label{shift}
\sum_{(a,r) \in \Delta_H} |F(a,r)| = o(N^d M)
\end{equation}
where
$$ \Delta_H := ([-H,N+H]^d \times [-H,M+H]) \backslash ([H,N-H]^d \times [H,M-H])$$
is the $H$-neighbourhood of the boundary of $[N]^d \times [M]$.

Let $L = o(M)$ be a quantity with $2H \leq L \leq M$ to be chosen later.  To prove \eqref{shift}, we observe that
\begin{align*}
\sum_{(a,r) \in \Delta_H} |F(a,r)| &\ll L^{-2d-2} \sum_{a \in [N]^d} \sum_{r \in [M]} 1_{\Delta_{H+2L}}(a,r)
\sum_{b,b' \in [L]^d; s,s' \in [L]} |F(a+b-b', r+s-s')| \\
&\ll
L^{-2d-2} |\Delta_{H+2L}|^{1/2}
(\sum_{a \in [N]^d} \sum_{r \in [M]} (\sum_{b,b' \in [L]^d; s,s' \in [L]} |F(a+b-b', r+s-s')|)^2)^{1/2} \\
&\ll (L/M)^{1/2} N^d M 
(\E_{a \in [N]^d; r \in [M]; b,b',b'',b'' \in [L]^d; s,s',s'',s''' \in [L]} \\
&\quad F(a+b-b', r+s-s') F(a+b''-b''',r+s''-s'''))^{1/2} 
\end{align*}
where we have used the Cauchy-Schwarz inequality and the bound $|\Delta_{H+2L}| \ll (L/M) N^d M$; here and in the sequel we use the notation $X \ll Y$ or $Y \gg X$ to denote the estimate $X \leq CY$ where $C$ is independent of $n$.  Thus, to show \eqref{shift}, it will suffice to show that
$$
\E_{a \in [N]^d; r \in [M]; b,b',b'',b'' \in [L]^d; s,s',s'',s''' \in [L]} F(a+b-b', r+s-s') F(a+b''-b''',r+s''-s''') \ll 1 + o(1).$$
By \eqref{far-def}, the left-hand side may be bounded by
\begin{align*}
&\E_{a \in [N]^d; r \in [M]; b,b',b'',b'' \in [L]^d; s,s',s'',s''' \in [L]}\\
&\quad (\prod_{i=1}^d \prod_{c_i \in \Omega_i} \nu_{i}( a_i + b_i-b'_i+ c_i r + c_i (s-s')) \nu_{i}( a_i +b''_i-b'''_i+ c_i r + c_i (s''-s'''))) \times \\
&\quad \times (\nu_{i_0}(a_{i_0}+b_{i_0}-b'_{i_0}+c_* r+ c_*(s-s'))+1) (\nu_{i_0}(a_{i_0}+b''_{i_0}-b'''_{i_0}+c_* r+ c_* (s''-s'''))+1).
\end{align*}
By four applications of \eqref{sam}, this expression will be $4+o(1)$ if $L/M$ is sufficiently slowly decaying, and the claim \eqref{shift} (and hence \eqref{shift-2}) follows.  
\end{proof}

We return to the task of establishing \eqref{bando}.  We first introduce some additional averaging.  Let $H = o(M)$ be a quantity to be chosen later.  From \eqref{shift-2} we see that
$$
\E_{a \in [N]^d} \E_{r \in [M]} F(a-\sum_{b \in B} b h_b,r+ \sum_{b \in B} h_b) = \E_{a \in [N]^d} \E_{r \in [M]} F(a,r) + o(1)
$$
whenever $h_b \in [H]$ for $b \in B$, where $B := \prod_{i=1}^d \Omega_i$.  Averaging over the $h_b$, we conclude that
$$
\E_{a \in [N]^d} \E_{r \in [M]} \E_{h_B \in [H]^B} F(a-\sum_{b \in B} b h_b,r+ \sum_{b \in B} h_b) = \E_{a \in [N]^d} \E_{r \in [M]} F(a,r) + o(1)
$$
where $h_B := (h_b)_{b \in B}$, and so it will suffice to show that
\begin{equation}\label{sandal}
\E_{a \in [N]^d} \E_{r \in [M]} \E_{h_B \in [H]^B} F(a-\sum_{b \in B} b h_b,r+ \sum_{b \in B} h_b) = o(1).
\end{equation}
To explain why we introduce the additional variables $h_B$, observe that expression
\begin{equation}\label{fabo}
F(a-\sum_{b \in B} b h_b,r+ \sum_{b \in B} h_b)
\end{equation}
can be expanded as
\begin{align*}
&(\prod_{b' \in B_0} 1_{A}(a+rb' + \sum_{b \in B \backslash \{b'\}} (b'-b) h_b)) \times \\
&\quad \times (\prod_{i=1}^d \prod_{c_i \in \Omega_i} \nu_{i}( a_i + c_i r + \sum_{b \in B: c_i \neq b_i} (c_i-b_i) h_b)) \times \\
&\quad \times (\nu_{i_0}-1)(a_{i_0}+c_* r + \sum_{b \in B} (c_* - b_{i_0}) h_b).
\end{align*}
The point is then that each of the factors $1_{A}(a+rb' + \sum_{b \in B} (b'-b) h_b)$, $\nu_{i}( a_i + c_i r + \sum_{b \in B} (c_i-b_i) h_b)$ only depends on a proper subset of the variables $h_b$ with $b \in B$, with only the final factor $(\nu_{i_0}-1)(a_{i_0}+c_* r + \sum_{b \in B} (c_* - b_{i_0}) h_b)$ being dependent on all of the components of $h$.  To emphasise this structure, we now write \eqref{fabo} as
$$
\prod_{B' \subset B} f_{B', a, r}(h_{B'})$$
where $h_{B'} := (h_b)_{b \in B'}$,
$$ f_{B,a,r}(h_B) := (\nu_{i_0}-1)(a_{i_0}+c_* r + \sum_{b \in B} (c_* - b_{i_0}) h_b),$$
and for each proper subset $B' \subsetneq B$ of $B$, we define
\begin{align*}
f_{B',a,r}(h_{B'}) &:= \left(\prod_{b' \in B_0: B' = B \backslash \{b'\}} 1_{A}(a+rb' + \sum_{b \in B'} (b'-b) h_b)\right) \times \\
&\quad \left(\prod_{i=1}^d \prod_{c_i \in \Omega_i: B' = \{ b \in B: c_i \neq b_i\}} \nu_{i}( a_i + c_i r + \sum_{b \in B'} (c_i-b_i) h_b)\right).
\end{align*}
Observe that for all such proper subsets $B' \subsetneq B$, we have
$$ |f_{B',a,r}(h_{B'})| \leq \nu_{B',a,r}(h_{B'})$$
where
$$
\nu_{B',a,r}(h_{B'}) :=
\prod_{i=1}^d \prod_{c_i \in \Omega_i: B' = \{ b \in B: c_i \neq b_i\}} \nu_{i}( a_i + c_i r + \sum_{b \in B} (c_i-b_i) h_b).$$
We can then rewrite \eqref{sandal} as the bound 
\begin{equation}\label{sandal-2}
\E_{a \in [N]^d} \E_{r \in [M]} \E_{h_B \in [H]^B} \prod_{B' \subset B} f_{B', a, r}(h_{B'}) = o(1).
\end{equation}

We now invoke a \emph{weighted generalised von Neumann theorem} from \cite[Corollary B.4]{gt-primes}, which asserts the inequality
$$ |\E_{h_B \in [H]^B} \prod_{B' \subset B} f_{B', a, r}(h_{B'})|
\leq \| f_{B,a,r}\|_{\Box^B(\nu)} \prod_{B' \subsetneq B} \| \nu_{B',a,r} \|_{\Box^{B'}(\nu)}^{1/2^{|B|-|B'|}}$$
where
\begin{equation}\label{boxb}
\| f \|_{\Box^{B'}(\nu)} := \left( \E_{h^{(0)}_{B'}, h^{(1)}_{B'} \in [H]^{B'}} 
(\prod_{\omega_{B'} \in \{0,1\}^{B'}} f(h^{(\omega_{B'})_{B'}}))
(\prod_{B'' \subsetneq B'} \prod_{\omega_{B''} \in \{0,1\}^{B''}} \nu_{B'',a,r}( h^{(\omega_{B''})}_{B''}))
\right)^{1/2^{|B'|}}
\end{equation}
for all $B' \subset B$ and functions $f: [H]^{B'} \to \R$, where $h^{(i)}_{B'} = (h^{(i)}_b)_{b \in B'}$ for $i=0,1$ and
$$ h^{(\omega_b)_{b \in B''}}_{B''} := (h^{(\omega_b)}_b)_{b \in B''}.$$
This inequality is established by a finite number of applications of the Cauchy-Schwarz inequality; see \cite[Corollary B.4]{gt-primes} for details.  For us, the purpose of this inequality is to eliminate all dependence on the set $A$, leaving behind expressions that depend only on the $\nu_i$.  Applying this inequality, we see that we can bound the left-hand side of \eqref{sandal-2} by
$$
\E_{a \in [N]^d} \E_{r \in [M]} \| f_{B,a,r}\|_{\Box^B(\nu)} \prod_{B' \subsetneq B} \| \nu_{B',a,r} \|_{\Box^{B'}(\nu)}^{1/2^{|B|-|B'|}}.$$
To prove \eqref{sandal-2}, it thus suffices by the H\"older inequality to establish the following bounds:

\begin{lemma} We have
\begin{equation}\label{hold-1}
\E_{a \in [N]^d} \E_{r \in [M]} \| f_{B,a,r}\|_{\Box^B(\nu)}^{2^{|B|}} = o(1)
\end{equation}
and
\begin{equation}\label{hold-2}
\E_{a \in [N]^d} \E_{r \in [M]} \| \nu_{B',a,r}\|_{\Box^{B'}(\nu)}^{2^{|B'|}} = 1 + o(1)
\end{equation}
for all $B' \subsetneq B$.
\end{lemma}

\begin{proof}
Both of these estimates will ultimately follow from the linear forms condition \eqref{sam}.  We first verify this for \eqref{hold-1}.  By \eqref{boxb}, we can expand the left-hand side of \eqref{hold-1} as
\begin{align*}
& \E_{a \in [N]^d; r \in [M]; h^{(0)}_B, h^{(1)}_B \in [H]^B} \\
&\quad (\prod_{\omega_{B} \in \{0,1\}^{B}} (\nu_{i_0}-1)(a_{i_0}+c_* r + \sum_{b \in B} (c_* - b_{i_0}) h_b^{(\omega_b)})) \times \\
&\quad \times (\prod_{B'' \subsetneq B} \prod_{\omega_{B''} \in \{0,1\}^{B''}} 
\prod_{i=1}^d \prod_{c_i \in \Omega_i: B'' = \{ b \in B: c_i \neq b_i\}} \nu_{i}( a_i + c_i r + \sum_{b \in B''} (c_i-b_i) h^{(\omega_b)}_b) ).
\end{align*}
We can expand this expression further as an alternating sum of $2^{|B|}$ expressions, each of the form \eqref{sam}.  The claim \eqref{hold-1} then follows from \eqref{sam} if $H/M$ is chosen to decay to zero sufficiently slowly, provided that we can show that for each $1 \leq i \leq d$, the linear forms
$$ (r,h^{(0)}_B,h^{(1)}_B) \mapsto c_i r + \sum_{b \in B''} (c_i-b_i) h^{(\omega_b)}_b)$$
for $B'' \subsetneq B$, $\omega_{B''} \in \{0,1\}^{B''}$, and $c_i \in \Omega_i$ with $B'' = \{b \in B: c_i \neq b_i\}$, and also (if $i=i_0$)
$$ (r,h^{(0)}_B,h^{(1)}_B) \mapsto c_* r + \sum_{b \in B} (c_* - b_{i_0}) h_b^{(\omega_b)}$$
for $\omega_B \in \{0,1\}^B$, are all distinct.  However, an inspection of these forms reveal that each form depends on a different set of variables $h^{(\omega_b)}_b$ with non-zero coefficients, and the claim follows.

In a similar spirit, the left-hand side of \eqref{hold-2} may be expanded as
$$ \E_{a \in [N]^d; r \in [M]; h^{(0)}_{B'}, h^{(1)}_{B'} \in [H]^{B'}}
\prod_{B'' \subseteq B'} \prod_{\omega_{B''} \in \{0,1\}^{B''}} 
\prod_{i=1}^d \prod_{c_i \in \Omega_i: B'' = \{ b \in B': c_i \neq b_i\}} \nu_{i}( a_i + c_i r + \sum_{b \in B''} (c_i-b_i) h^{(\omega_b)}_b) ,$$
and by using \eqref{sam} as before we see that this expression is $1+o(1)$ as required.  
\end{proof}

The proof of Proposition \ref{compact} is (finally) complete.
\end{proof}

From Proposition \ref{compact} and the construction of $\mu$, we obtain as a corollary that
\begin{equation}\label{muu}
 \mu(F) = \mu_{\Omega}(F)
 \end{equation}
whenever $\Omega = (\Omega_1,\ldots,\Omega_d)$ is a tuple of finite subsets of $\Z$, and $F$ is ${\mathcal B}_\Omega$-measurable (and thus clopen).  In particular, since the set $E$ given by \eqref{e-def} is $(\{0\},\ldots,\{0\})$-measurable, we have
$$ \mu(E) = \mu_{(\{0\},\ldots,\{0\})}(E).$$
Applying \eqref{muf-omega} and \eqref{mosh}, we conclude that
\begin{align*}
 \mu(E) &= p\!-\!\lim_{n \to \infty}   \E_{a \in [N_n]^d} \E_{r \in [M_n]} 1_E(B_{a,r,n}) \prod_{i=1}^d \nu_{i,n}( a_i )\\
 &= p\!-\!\lim_{n \to \infty}   \E_{a \in [N_n]^d} 1_{A_n}(a) \prod_{i=1}^d \nu_{i,n}( a_i )
\end{align*}
and hence by \eqref{nd-2} we have
$$ \mu(E) \geq \delta$$
as required.

In a similar vein, suppose that $h_1,\ldots,h_k \in \Z^d$ are such that
$$ \mu( T_{h_1} E \cap \ldots \cap T_{h_k} E) > 0.$$
We can find a tuple $\Omega = (\Omega_1,\ldots,\Omega_d)$ of finite subsets of $\Z$ such that $\prod_{i=1}^d \Omega_i$ contains $h_1,\ldots,h_k$.  From \eqref{muu} we thus have
$$ \mu_\Omega( T_{h_1} E \cap \ldots \cap T_{h_k} E) > 0.$$
and thus for \eqref{muf-omega} we have
$$ \mu_{\Omega,n}( T_{h_1} E \cap \ldots \cap T_{h_k} E) > 0$$
for arbitrarily large $n$.  However, from \eqref{mosh}, we see that this quantity can be expanded as
$$
\E_{a \in [N_n]^d} \E_{r \in [M_n]} \prod_{j=1}^k 1_{T_{h_j} E}(B_{a,r,n}) \prod_{i=1}^d \prod_{c_i \in \Omega_i} \nu_{i,n}( a_i + c_i r );$$
since $1_{T_{h_j} E}(B_{a,r,n}) = 1_{A_n}(a+rh_j)$, we conclude that there is $a \in [N_n]^d$ and $r \in [M_n]$ such that
$$ a+rh_1, \ldots, a+rh_k \in A_n,$$
as required.

To conclude the proof of Proposition \ref{fcp-weight}, it now suffices to show

\begin{lemma}[Translation invariance]  For every $h \in \Z^d$, one has $(T_h)_* \mu = \mu$.
\end{lemma}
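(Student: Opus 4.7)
The plan is to reduce the translation invariance of $\mu$ to an identity between the intermediate measures $\mu_\Omega$, which can then be verified at the finite-$n$ level by a change of variables.  Since the continuous functions on $X$ that depend on only finitely many coordinates are uniformly dense in $C(X)$ and $\mu$ is a probability measure, it suffices to show $\int g\, d\mu = \int g \circ T_h\, d\mu$ for every function $g$ that is $\B_{\Omega_0}$-measurable for some finite tuple $\Omega_0 = ((\Omega_0)_1,\ldots,(\Omega_0)_d)$ of finite subsets of $\Z$.  Such a $g \circ T_h$ is then $\B_{\Omega_0 - h}$-measurable, where $(\Omega_0 - h)_i := (\Omega_0)_i - h_i$, and applying Proposition~\ref{compact} shows that the sequences $\int g\, d\mu_{[-m,m]^d}$ and $\int g \circ T_h\, d\mu_{[-m,m]^d}$ become eventually constant in $m$ (equal to $\int g\, d\mu_{\Omega_0}$ and $\int g \circ T_h\, d\mu_{\Omega_0 - h}$ respectively), so the $p$-limits defining $\mu$ reduce the problem to verifying
\[
 \int g\, d\mu_{\Omega_0} \;=\; \int g\circ T_h\, d\mu_{\Omega_0 - h}.
\]

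To establish this identity I would work at the finite-$n$ level and substitute $a' = a - rh$ in
\[
\int g\circ T_h\, d\mu_{\Omega_0 - h, n} = \E_{a \in [N_n]^d} \E_{r \in [M_n]} g(B_{a - rh, r, n})\, W(a, r, \Omega_0 - h),
\]
where $W(a, r, \Omega) := \prod_{i=1}^d \prod_{c \in \Omega_i} \nu_{i,n}(a_i + cr)$.  The crucial algebraic identity $W(a' + rh, r, \Omega_0 - h) = W(a', r, \Omega_0)$, obtained by reindexing $c \mapsto c + h_i$ in each coordinate, ensures that after the substitution the expression matches $\int g\, d\mu_{\Omega_0, n}$ exactly, except that the averaging over $a'$ runs over the shifted box $[N_n]^d - rh$ rather than $[N_n]^d$.

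The hard part will be to bound the resulting boundary error
\[
\frac{1}{N_n^d M_n} \sum_{r \in [M_n]} \sum_{a' \in [N_n]^d \,\Delta\, ([N_n]^d - rh)} g(B_{a',r,n})\, W(a', r, \Omega_0)
\]
by $o(1)$ as $n \to \infty$ for fixed $g, h, \Omega_0$.  The symmetric difference has relative density $O(|h| M_n/N_n) = o(1)$, but since $W$ is unbounded this is not immediate.  The plan is to exploit the coordinatewise factorisation $W(a,r,\Omega_0) = \prod_{i=1}^d W_i(a_i, r)$ with $W_i(a_i,r) := \prod_{c \in (\Omega_0)_i} \nu_{i,n}(a_i + cr)$, use inclusion-exclusion to decompose the symmetric-difference indicator as a signed sum of products of single-coordinate boundary indicators, and estimate each resulting piece by applying the linear forms condition \eqref{sam} with auxiliary averaging variables, in the spirit of the Cauchy-Schwarz and smoothing manoeuvre used in the shift-invariance lemma within the proof of Proposition~\ref{compact}.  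The ability of the infinite linear forms condition to accommodate arbitrary combinations of auxiliary variables is critical here, and the hypothesis $M_n = o(N_n)$ ensures the final $o(1)$ bound.  Taking $p\!-\!\lim_n$ then delivers the identity above and completes the proof.
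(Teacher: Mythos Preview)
Your proposal is correct and follows essentially the same route as the paper: reduce to $\B_\Omega$-measurable observables, use the compatibility in Proposition~\ref{compact} (equivalently \eqref{muu}) to pass from $\mu$ to the intermediate measures $\mu_\Omega$, perform the change of variables $a \mapsto a - rh$ at the finite-$n$ level (which matches the weight exactly after reindexing $c \mapsto c + h_i$), and then control the resulting boundary term.  The paper does exactly this, phrasing the reduction as $\mu_{\Omega+h,n}(T_h F) = \mu_{\Omega,n}(F) + o(1)$ and then simply citing the already-established shift estimate \eqref{shift} (whose proof via Cauchy--Schwarz smoothing and the linear forms condition applies verbatim to the integrand here, and extends to shifts of size $O(M_n)$ in the $a$-variable since $M_n = o(N_n)$); your plan to redo that smoothing argument is just a direct re-derivation of the same bound rather than a different idea.
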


\begin{proof}  From inner and outer regularity, and the fact that the topology of $X$ has a clopen base of basic cylinder sets, it suffices to show that
$$ \mu(T_h F) = \mu(F)$$
whenever $F$ is a boolean combination of finitely many basic clopen sets, and in particular lies in ${\mathcal B}_\Omega$ for some tuple $\Omega= (\Omega_1,\ldots,\Omega_d)$ of finite subsets of $\Z$.  By the inclusion-exclusion formula as before, we may take $F$ to be of the form \eqref{fbo} for some $B_0 \subset \prod_{i=1}^d \Omega_i$.
From \eqref{muu}, it thus suffices to show that
$$ \mu_{\Omega+h}(T_h F) =\mu_\Omega(F)$$
where $h = (h_1,\ldots,h_d)$ and $\Omega+h := (\Omega_1+h_1,\ldots,\Omega_d+h_d)$.  By \eqref{muf-omega}, it suffices to show that
\begin{equation}\label{load}
 \mu_{\Omega+h,n}(T_h F) = \mu_{\Omega,n}(F) + o(1).
\end{equation}
From \eqref{liar} we have
$$
\mu_{\Omega,n}(F) = \E_{a \in [N]^d} \E_{r \in [M]} F(a,r)$$
where $F(a,r)$ is defined by \eqref{far-def}.  A similar computation reveals that
$$
\mu_{\Omega+h,n}(T_h F) = \E_{a \in [N]^d} \E_{r \in [M]} F(a+hr,r).$$
The claim \eqref{load} then follows from \eqref{shift}.
\end{proof}

The proof of Proposition \ref{fcp-weight} is now complete.

By combining Proposition \ref{fcp-weight} with Theorem \ref{mrt}, and making the trivial but important observation that the constellation $a+rv_1,\ldots,a+rv_k$ has a dilation symmetry, in the sense that if $a+rh_1,\ldots,a+rh_k \in A$ for some $h_1 = sv_1,\ldots,h_k=sv_k$ and some natural numbers $r,s$, then $A$ contains a constellation of the form $a+r' v_1,\ldots,a+r' v_k$ where $r' := rs$, we obtain a relative version of Theorem \ref{fk-thm}:

\begin{theorem}[Relative multiple Szemer\'edi theorem]\label{rmst}  Let $d \geq 1$ be a fixed dimension.  Let $N = N_n$ be a sequence of natural numbers going to infinity, and let $M_n$ be a sequence going to infinity such that $M_n = o_{n \to\infty}(N_n)$.  For each $n$, suppose that we have functions $\nu_i = \nu_{i,n}: [N_n] \to R^+$ for $i=1,\ldots,d$ obeying the linear forms condition from Proposition \ref{fcp-weight}.  Let $\delta>0$ and $v_1,\ldots,v_k \in \Z^d$.  Then if $n$ is sufficiently large, any subset $A_n$ of $[N_n]^d$ with
$$ \E_{a \in [N_n]^d} 1_{A_n}(a) \prod_{i=1}^d \nu_{i,n}(a_i) \geq \delta$$
contains a constellation of the form $a+r'v_1,\ldots,a+r'v_k$ with $a \in \Z^d$ and $r'>0$ a natural number.
\end{theorem}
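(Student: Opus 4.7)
The plan is to argue by contradiction, chaining together the two black-box results at our disposal: the weighted Furstenberg correspondence principle (Proposition \ref{fcp-weight}) and the Furstenberg-Katznelson multiple recurrence theorem (Theorem \ref{mrt}). Suppose the conclusion of Theorem \ref{rmst} failed for some $\delta > 0$ and $v_1,\ldots,v_k \in \Z^d$. Then one could extract an infinite subsequence of indices (which after relabeling we continue to call $n$) along which the sets $A_n \subset [N_n]^d$ satisfy the weighted density bound but contain no constellation of the form $a + r' v_1, \ldots, a + r' v_k$ with $a \in \Z^d$ and $r' > 0$ a natural number. This is the situation from which we aim to derive a contradiction.

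Feed this subsequence, together with the weights $\nu_{i,n}$ (which obey the linear forms condition by hypothesis), into Proposition \ref{fcp-weight}. This produces a $\Z^d$-system $(X,\mathcal{B},\mu,(T_h)_{h \in \Z^d})$ and a measurable set $E$ with $\mu(E) \geq \delta$. Applying Theorem \ref{mrt} to this system, the set $E$, and the shift vectors $v_1,\ldots,v_k$, we obtain a natural number $s > 0$ such that
$$ \mu( T_{sv_1} E \cap \ldots \cap T_{sv_k} E) > 0.$$

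Now set $h_j := sv_j$ for $j = 1,\ldots,k$. The positivity of the above intersection triggers the conclusion of Proposition \ref{fcp-weight}: for a further infinite subsequence of $n$, there exist $a \in [N_n]^d$ and $r \in [M_n]$ such that $a + rh_1, \ldots, a + rh_k \in A_n$. Rewriting $a + rh_j = a + (rs)v_j$ and setting $r' := rs$, which is a positive natural number, this exhibits a genuine constellation $a + r'v_1, \ldots, a + r'v_k$ inside $A_n$, directly contradicting our standing hypothesis. The only conceptual subtlety lies in the dilation trick: Proposition \ref{fcp-weight} only produces a dilation $r$ in the truncated range $[M_n]$, but composing with the much larger dilation $s$ supplied by the multiple recurrence theorem (which is insensitive to any such range) absorbs this mismatch, since what Theorem \ref{rmst} ultimately needs is merely \emph{some} positive integer dilation parameter. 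With the two black boxes in place this is the whole argument; there is no serious technical obstacle to overcome here, as all of the heavy lifting has been concentrated in the proofs of Proposition \ref{fcp-weight} and Theorem \ref{mrt}.
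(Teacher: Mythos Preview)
Your proof is correct and follows essentially the same approach as the paper: argue by contradiction, pass to a subsequence of counterexamples, apply Proposition \ref{fcp-weight} followed by Theorem \ref{mrt}, and then invoke the dilation symmetry $r' := rs$ to absorb the auxiliary dilation $s$ coming from the recurrence theorem. The paper highlights this same ``trivial but important'' dilation observation just before stating the theorem, and its one-line proof is precisely the contradiction argument you spell out.
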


Indeed, if this theorem fails, then by applying Proposition \ref{fcp-weight} to a sequence of counterexamples one can use Theorem \ref{mrt} to obtain a contradiction, just as in the proof of Theorem \ref{fk-thm} via Proposition \ref{fcp}.

\begin{remark}\label{stronger-weight} As in Remark \ref{stronger}, one can strengthen the conclusion of Proposition \ref{fcp-weight}, namely that
$$ \limsup_{n \to \infty} \mu_{\Omega,n}(T_{h_1} E \cap \ldots \cap T_{h_k} E ) \geq \mu( T_{h_1} E \cap \ldots \cap T_{h_k} E)$$
for any $h_1,\ldots,h_k \in \Z^d$ and any $\Omega_1,\ldots,\Omega_d$ whose Cartesian product contains $h_1,\ldots,h_k$, and similarly if one or more of the $A_n$ (and the respective copies of $E$) are replaced by their complements.
\end{remark}

\section{Proof of Theorem \ref{main}}

With the aid of the relative multiple Szemer\'edi theorem (Theorem \ref{rmst}), we are now ready to prove Theorem \ref{main}.  Suppose for contradiction that this theorem fails, thus we can find $d \geq 1$, $\delta>0$, and $v_1,\ldots,v_k \in \Z^d$, a sequence $N_n$ going to infinity as $n \to \infty$, and subsets $A_n$ of $({\mathcal P} \cap [N_n])^d$ such that
$$ |A_n| \geq \delta |{\mathcal P} \cap [N_n]|^d,$$
but such that no $A_n$ contains a constellation of the form $a+rv_1,\ldots,a+rv_k$ with $a \in \Z^k$ and $r$ a positive integer.  By deleting a small number of elements from $A_n$ (and reducing $\delta$ accordingly), we may assume that $A_n$ lies in $[\delta' N_n,N_n]^d$ for some $0 < \delta' \leq 1/2$ that can depend on $\delta$ but is independent of $n$.

From the prime number theorem, we have
$$ |A_n| \gg \frac{N_n^d}{\log^d N_n}$$
where we allow the implied constants in the $\gg$ notation to depend on $d, \delta, \eps, v_1,\ldots,v_k$, but not on $n$.  

We now drop the explicit dependence on $n$, thus for instance writing $A = A_n$ and $N = N_n$.  It is tempting to now apply Theorem \ref{rmst}, but the primes contain some local structure (in particular, they largely avoid certain residue classes) which prevent one from directly constructing a collection $\nu_{i,n}$ of weights obeying the required pseudorandomness conditions.  To deal with this issue, we use the ``$W$-trick'' from \cite{green-roth}, \cite{gt-primes}.  We let $w = w_n$ be a quantity growing (slowly) to infinity (with the precise choice of $w$ to be determined later), and let 
$$ W = W_n := \prod_{p \leq w} p$$
be the product of all the primes up to $w$.  If $w=w_n$ grows sufficiently slowly with $n$, then for sufficiently large $n$, all primes in $[\delta' N_n,N_n]$ are coprime to $W$, and thus lie in one of the $\phi(W)$ residue classes $b \mod W$ coprime to $W$, where $\phi(W)$ is the Euler totient function.  Applying the pigeonhole principle, we can then find for each sufficiently large $n$, residue classes $b_1 \mod W, \ldots, b_d \mod W$ (depending on $n$) that are coprime to $W$, and are such that
$$ |\{ a \in A: a_i = b_i \mod W \hbox{ for all } 1 \leq i \leq d \}| \gg \frac{N^d}{\phi(W)^d \log^d N}.$$
If we then set
$$ N' = N'_n := (1-\delta') N / W$$
and
$$ A' = A'_n := \{ a \in [N']^d: W(a + \lfloor \delta' N/W\rfloor) + b \in A \}$$
where $b := (b_1,\ldots,b_d)$, then we have 
\begin{equation}\label{alo}
 |A'| \gg \frac{W^d (N')^d}{\phi(W)^d \log^d N}.
\end{equation}
Also, we have 
\begin{equation}\label{nprod}
A' \subset \prod_{i=1}^d {\mathcal P}'_i
\end{equation}
where
$$ {\mathcal P}'_i := \{ a_i \in [N']: W(a_i + \lfloor \delta' N/W\rfloor) + b_i \in {\mathcal P} \}.$$
Also, since $A$ contains no constellations of the form $a+rv_1,\ldots,a+rv_k$ with $a \in \Z^d$ and $r$ a positive integer, the same claim is true for $A'$ (here we again take advantage of the dilation symmetry in the constellation $a+rv_1,\ldots,a+rv_k$). 

We now introduce the weights $\nu_i = \nu_{i,n}: [N'] \to \R^+$ for $i=1,\ldots,d$ by the formula
$$ \nu_i(a_i) := \frac{\phi(W)}{W} (\log N) 1_{{\mathcal P}'_i}(a_i),$$
then from \eqref{alo} we have
$$ \E_{a \in [N']^d} 1_{A'}(a) \prod_{i=1}^d \nu_i(a_i) \gg 1.$$
We can then apply Theorem \ref{rmst} to obtain a contradiction, as soon as we can find a sequence $M_n = o(N'_n)$ such that the pseudorandomness hypotheses in Proposition \ref{fcp-weight} are obeyed.

It suffices to show that for every collection of natural numbers $m, k_1,\ldots,k_d \geq 0$ and linear forms $\phi_{i,1},\ldots,\phi_{i,k_i}: \R^m \to \R$ with integer coefficients with the $\phi_{i,1}, \ldots, \phi_{i,k_i}$ pairwise distinct for each $i$, and any $\eps>0$ there exists a $0 < \kappa \leq \eps$ such that if $w>\frac{1}{\kappa}$ is fixed independently of $n$ (which makes $W$ fixed also), $b_1,\ldots,b_d \mod W$ are coprime to $W$, 
$\lambda>0$ is independent of $n$ and
\begin{equation}\label{lk}
 \lambda \kappa N'_n \leq L_{l,n} \leq \kappa N'_n
 \end{equation}
for all $1 \leq l \leq m$, and $b_1,\ldots,b_d \mod W$ are any residue classes coprime to $W$, then one has
\begin{equation}\label{dope}
| \E_{a \in [N'_n]^d} \E_{r \in \prod_{j=1}^m [L_{j,n}]} \prod_{i=1}^d \prod_{j=1}^{k_i} \nu_{i,n}(a_i + \phi_{i,j}(r) ) - 1 | \leq \eps + o(1)
\end{equation}
where the decay rate in the $o(1)$ term is allowed to depend on all the given data $m,k_i,\phi_{i,j}, \eps, w, b_1,\ldots,b_d, \lambda$ (but is uniform in the $L_{1,n},\ldots,L_{m,n}$), so in particular
\begin{equation}\label{dope-2}
| \E_{a \in [N'_n]^d} \E_{r \in \prod_{j=1}^m [L_{j,n}]} \prod_{i=1}^d \prod_{j=1}^{k_i} \nu_{i,n}(a_i + \phi_{i,j}(r) ) - 1 | \leq 2\eps 
\end{equation}
whenever $n$ is sufficiently large depending on $m,k_i,\phi_{i,j}, \eps, w, b_1,\ldots,b_d, \lambda$.  Indeed, as the number of choices for $m,k_i,\phi_{i,j},w,b_i$ is countable (and the number of choices for $\eps,\lambda$ can be restricted to a countable set such as reciprocals of the natural numbers), one can then construct a sequence $M_n$ with $M_n/N'_n$ decaying (slowly) to zero as $n \to \infty$ and a sequence $w_n$ growing (slowly) to infinity as $n \to \infty$ with the properties required for Proposition \ref{fcp-weight} by a standard diagonalisation argument.  More precisely, if one countably enumerates the possible values of the $m,k_i,\phi_{i,j},b_i,\eps,\lambda$ with $\eps,\lambda$ the reciprocal of natural numbers, then for each natural number $M$, one can find $w^{(M)} \geq M$, $0 < \kappa^{(M)} \leq 1/M$ and $n^{(M)} $ such that \eqref{dope-2} holds for $w = w^{(M)}$, $\kappa = \kappa^{(M)}$, all $n \geq n^{(M)}$, any of the first $M$ choices of $m,k_i,\phi_{i,j}, \eps, w, b_1,\ldots,b_d, \lambda$ in this enumeration, and any $L_{1,n},\ldots,L_{m,n}$ obeying \eqref{lk}.  One can arrange for the $n^{(M)}$ to be increasing in $M$.  If we then set $w_n := w^{(M)}$ and $H_n := \kappa^{(M)} N_n$ for $n^{(M)} \leq n < n^{(M+1)}$, we obtain the claim.

It remains to establish \eqref{dope}.  For this we use the main result in \cite{linprimes} as formulated in Theorem 5.1 of that paper (combined with 
the main results of \cite{mobius}, \cite{gtz}). 
We need to verify the conditions of Theorem 5.1; indeed observe that no two of the linear forms
$$(a,r) \mapsto a_i +  \phi_{i,j}(r)$$
with $i=1,\ldots,d$ and $j=1,\ldots,k_i$ are linearly dependent, and therefore the system of linear forms $(a_i +  \phi_{i,j}(r))_{1 \le i \le d, 1 \le j \le k_i }$ is of finite complexity.

\section{Further remarks}

By pursuing the strengthening of Proposition \ref{fcp-weight} outlined in Remark \ref{stronger-weight}, one can obtain the lower bound of
\begin{equation}\label{cat}
(c(\delta) - o_{n \to \infty}(1)) \frac{N_n^{d+1}}{\log^{\sum_{i=1}^d |\Omega_i|} N_n}
\end{equation}
for the number of pairs $(a,r) \in \Z^d \times \N$ with $a+rv_1,\ldots,a+rv_k \in A$, where $c(\delta)>0$ depends only on $d,v_1,\ldots,v_k,\delta$ and $\Omega_i$ is the projection of the set $\{v_1,\ldots,v_k\} \subset \R^d$ onto the $i^{\operatorname{th}}$ coordinate.  In the opposite direction, standard upper bound sieves such as the Selberg sieve\footnote{One could also use the main results of \cite{linprimes}, \cite{mobius}, \cite{gtz} here, although this would be overkill.} give an upper bound of the form
$$ O\left( \frac{N_n^{d+1}}{\log^{\sum_{i=1}^d |\Omega_i|} N_n} \right)$$
where the implied constant in the $O()$ notation depends only on $d,v_1,\ldots,v_k$; thus (as in previous applications of the transference principle) the argument gives the right order of magnitude for the number of constellations $a+rv_1,\ldots,a+rv_k$ to be found in the set $A$.  On the other hand, the decay rate of $o_{n \to \infty}(1)$ in \eqref{cat} is ineffective, primarily due to the appeal to the Siegel-Walfisz theorem in \cite{mobius} but also due to the use of nonstandard analysis in \cite{gtz}.  As such, our arguments do not provide an effective bound for the magnitude of the smallest value of $N_n$ for which a given constellation $a+rv_1,\ldots,a+rv_k$ is to appear.  However, the methods in \cite{cookt}, relying on a new weighted hypergraph removal lemma, should provide effective bounds (though, as is usually the case with results based on regularity lemmas, one expects the bounds obtained here to be tower-exponential or worse).

As observed in \cite{tz}, the methods in \cite{gt-primes} allow for the conclusion of Theorem \ref{gt-thm} to be strengthened, by restricting the shift parameter $r$ to be of size at most $N^c$ for any fixed constant $c>0$.  This is ultimately because the weights $\nu$ constructed by sieve-theoretic techniques can still obey pseudorandom conditions on relatively short intervals such as $[N,N+N^c]$, provided that one reduces the sieve level accordingly.  It may be that one could achieve similar strengthenings using the hypergraph methods from \cite{cook}.  If however one wishes to use the methods in this paper to obtain a similar restriction of the shift parameter $r$ in Theorem \ref{main}, one would need to establish a version of the main results in \cite{linprimes} on similarly short intervals $[N,N+N^c]$.  If one seeks such bounds for arbitrarily small $c$, it is likely that one will be forced to assume some unproven hypotheses such as the Generalised Riemann Hypothesis (or at least the Generalised Density Conjecture).  Some of these issues will be addressed in a forthcoming paper of Soundararajan and the second author.  If one can obtain such localised versions of the results in \cite{linprimes}, it may then become possible to obtain a polynomial generalisation of Theorem \ref{main} (in the spirit of the main results of \cite{tz}), by incorporating the methods from \cite{tz}, and by replacing the multi-dimensional Szemer\'edi theorem from \cite{fk0} with the multi-dimensional polynomial Szemer\'edi theorem from \cite{bl}.  We will not pursue these matters here.

The arguments given in this paper also apply (almost surely) in the case when ${\mathcal P}$ is replaced by a random subset ${\mathcal A}$ of the integers in which the events $n \in {\mathcal A}$ occur with probability $n^{-o(1)}$ and are jointly independent.  Of course, in that case the linear forms condition can be almost surely verified from the Chernoff inequality and the Borel-Cantelli lemma, without the need of the difficult results in \cite{linprimes}, \cite{mobius}, \cite{gtz}.  However, these results were already known in the random case, down to the sharp threshold of $n^{-1/(k-1)}$ for the density; see \cite{schacht} (together with closely related work in \cite{conlon}, as well as earlier work in \cite{klr} for the case of progressions of length three, see also \cite{hl}).

\end{document}